\theoremstyle{plain}
\newtheorem{theorem}{Theorem}[section]
\newtheorem{lemma}[theorem]{Lemma}
\newtheorem*{thm}{Main Theorem}
\newtheorem{proposition}[theorem]{Proposition}
\theoremstyle{definition}
\newtheorem{definition}[theorem]{Definition}
\newtheorem{remark}[theorem]{Remark}
\newtheorem{example}[theorem]{Example}
\renewcommand{\phi}{\varphi}
\renewcommand{\epsilon}{\varepsilon}
\newcommand{\set}[1]{\ensuremath{ \left\{#1\right\} }} 
\newcommand{\norm}[1]{\ensuremath{ \left\lVert#1\right\rVert }} 
\newcommand{\absval}[1]{\ensuremath{ \left\lvert#1\right\rvert }} 
\newcommand{\bs}{\ensuremath{\backslash}}
\newcommand{\lt}{\ensuremath{\left}}
\newcommand{\rt}{\ensuremath{\right}}
\newcommand{\Om}{\ensuremath{\Omega}}
\newcommand{\Lm}{\ensuremath{\Lambda}}
\newcommand{\Dl}{\ensuremath{\Delta}}
\newcommand{\Ta}{\ensuremath{\Theta}}
\newcommand{\om}{\ensuremath{\omega}}
\newcommand{\lm}{\ensuremath{\lambda}}
\newcommand{\gm}{\ensuremath{\gamma}}  
\newcommand{\al}{\ensuremath{\alpha}} 
\newcommand{\bt}{\ensuremath{\beta}}  
\newcommand{\sg}{\ensuremath{\sigma}}
\newcommand{\ep}{\ensuremath{\epsilon}}
\DeclareSymbolFont{bbold}{U}{bbold}{m}{n}
\DeclareSymbolFontAlphabet{\mathbbold}{bbold}
\newcommand{\ind}{\ensuremath{\mathbbold{1}}}
\DeclareMathOperator{\diam}{diam}
\DeclareMathOperator{\supp}{supp}
\newcommand{\cC}{\ensuremath{\mathcal{C}}}
\newcommand{\cE}{\ensuremath{\mathcal{E}}}
\newcommand{\cL}{\ensuremath{\mathcal{L}}}
\newcommand{\cP}{\ensuremath{\mathcal{P}}}
\newcommand{\cZ}{\ensuremath{\mathcal{Z}}}
\newcommand{\CC}{\ensuremath{\mathbb C}} 
\newcommand{\NN}{\ensuremath{\mathbb N}}
\newcommand{\RR}{\ensuremath{\mathbb R}}
\newcommand{\maeom}{$m$ a.e. $\om\in\Om$}
\newcommand{\nothing}{}
\newcommand{\IO}{I} 
\DeclareMathOperator{\Einf}{Einf} 
\DeclareMathOperator{\Esup}{Esup}
\def\var{\text{{\rm var}}} 
\def\BV{\text{{\rm BV}}}
\def\leb{\text{{\rm Leb}}}
\def\lt{\left}
\def\rt{\right}
\providecommand{\phantomsection}{}
\newcommand{\mylabel}[2]{\raisebox{.7\normalbaselineskip}{\phantomsection}(#1)%
	\def\@currentlabel{#1}\textlabel{#2}}
\newcommand{\cEP}{\ensuremath{\cE P}} 
\numberwithin{equation}{section}
\title
[Equilibrium states for non-transitive random dynamical systems]
{Equilibrium states for non-transitive random open and closed dynamical systems}
\date{\today}
\author{Jason Atnip}
\address{School of Mathematics and Statistics, University of New South Wales, Sydney, NSW 2052, Australia}
\email{\href{j.atnip@unsw.edu.au}{j.atnip@unsw.edu.au} }
\author{Gary Froyland}
\address{School of Mathematics and Statistics, University of New South Wales, Sydney, NSW 2052, Australia}
\email{\href{g.froyland@unsw.edu.au}{g.froyland@unsw.edu.au} }
\author{Cecilia Gonz\'alez-Tokman}
\address{School of Mathematics and Physics, The University of Queensland, St Lucia, QLD 4072, Australia}
\email{\href{cecilia.gt@uq.edu.au}{cecilia.gt@uq.edu.au} }
\author{Sandro Vaienti}
\address{Aix Marseille Université, Université de Toulon, CNRS, CPT, 13009 Marseille, France}
\email{\href{vaienti@cpt.univ-mrs.fr}{vaienti@cpt.univ-mrs.fr} }
\begin{document}
\begin{abstract}
	We prove a 
	random Ruelle--Perron--Frobenius theorem and the existence of relative equilibrium states 
	for a class of random open and closed interval maps, without imposing transitivity requirements, such as mixing and covering conditions, which are prevalent in the literature. 
	This theorem provides existence and uniqueness of random conformal and invariant measures with exponential decay of correlations, and allows us to expand the class of examples of (random) dynamical systems amenable to multiplicative ergodic theory and the thermodynamic formalism. Applications include open and closed  non-transitive random maps, and a connection between Lyapunov exponents and escape rates through random holes.
	We are also able to treat 
	random intermittent maps with geometric potentials.
\end{abstract}
\maketitle
\tableofcontents
\section{Introduction}

Non-autonomous or random dynamical systems provide  flexible mathematical models to analyze a wide range of forced and noisy phenomena. They have been  identified as an important direction going forward in the study of chaotic systems \cite{Young13}.
One of the obstacles in the investigation of the long-term properties of such systems stems from the difficulty in identifying concrete examples for which the available theoretical results apply. 
This work uncovers scenarios where ergodic-theoretical tools can be used to establish results related to the thermodynamic formalism and decay of correlations for random dynamical systems, without imposing requirements such as transitivity or covering, which are often difficult to verify in this context.

For autonomous (time-homogeneous) finite-state Markov chains and systems whose dynamics can be encoded by
them,
such as shifts of finite type and systems with a Markov partition, 
one can use normal forms for reducible matrices \cite[Vol. 2]{Gantmacher} to analyze the dynamics using irreducible components as building blocks.
In sharp contrast,
there is no available decomposition of non-autonomous (random) systems into transitive or irreducible components. For instance, Buzzi \cite[\S 0.2]{BuzziEDC} noted difficulties in decomposing one-dimensional piecewise expanding random systems into
\emph{pathwise irreducible components}, and hence in the search for decompositions that could play the role of normal forms  in this setting. 
Accordingly, the study of decay of correlations and Ruelle--Perron--Frobenius type results in the random  setting has so far relied on stronger hypotheses, such as mixing and/or covering conditions \cite{Bogenschutz_RuelleTransferOperator_1995a,BaladiCorrelationSpectrum, BuzziEDC,kifer_thermodynamic_2008,mayer_distance_2011,MayerUrbanski15,DFGTV-cmp,atnip_critically_2020,Horan1,AFGTV1,stadlbauer_suzuki_varandas_2020,AFGTV2}. Similar assumptions appear in the investigation of memory loss in time-dependent systems \cite{OttStenlundYoung,StenlundYoungZhang,GuptaOttTorok13,DemersLiverani21}.

In this work,  we exhibit new  examples of random dynamical systems
for which invariant measures (relative equilibrium states) with exponential decay of correlations can be constructed.
We do not impose transitivity assumptions -- so neither topological mixing nor covering conditions are assumed -- but instead require that the random maps and random potentials satisfy a contracting type condition, on average; see Definition~\ref{def:cp} for details.
Naturally, when such results hold, one expects to obtain a one-dimensional top equivariant direction for the (random) transfer operator.  
Indeed, under mild extra assumptions, we also show that the multiplicative ergodic theorem of Froyland, Lloyd and Quas \cite{FLQ2} applies in this setting and yields a unique random Ruelle--Perron--Frobenius decomposition and further information.
Our approach builds on the concept of a contracting potential, introduced in the autonomous setting by Liverani, Saussol and Vaienti  \cite{liverani_conformal_1998}, but we work with random cones of functions, conveniently defined in terms of (essential) infimum and variation.
This work may also be regarded as a generalisation, complementary to \cite{AFGTV2}, of the work of Liverani and Maume-Deschamps \cite{liverani_maume_2003} to the random setting. Furthermore, our approach allows us prove results for both open and closed settings simultaneously, in a concise manner.

Our main results may be summarized as follows. See \S\ref{S:setting} for the allowed class of random open (and closed) maps, Definition~\ref{def:cp} for the notion of strongly contracting potential and \S\ref{S:pfMainThm} for precise statements and proofs. For the related random Ruelle--Perron--Frobenius type decomposition, see Theorem~\ref{thm:randompf}. 
Throughout this work, $\Einf(f)$ is the essential infimum of $f$ with respect to the Lebesgue measure. 

\begin{thm}
	Let $\cL_\om$ be the transfer operator associated to a random strongly contracting potential for a random open (or closed)  map of the interval $\{(T_\om,H_\om)\}_{\om\in\Om}$ (or $\{T_\om\}_{\om\in\Om}$), driven by an ergodic, invertible, probability preserving transformation $\sigma: (\Om, m) \to (\Om, m)$. 
	Then, there exist equivariant families, $\{q_\om\}_{\om\in\Om}$ and $\{\nu_\om\}_{\om\in\Om}$, of bounded variation functions and probability measures, respectively, given by 
	\[
	q_\om={\nothing}\lim_{n\to\infty}\frac{\cL_{\sg^{-n}\om}^{(n)} 1}{ \Einf(\cL_{\sg^{-n}\om}^{(n)} 1)}
	\quad \text{and} \quad
	\nu_\om (\cdot) =\lim_{n\to\infty}\frac{\Einf_{\nothing} (\cL_{\om}^{(n)} (\cdot))}{\Einf(\cL_{\om}^{(n)} 1)},
	\]
	such that
	$\cL_\om q_\om = \lm^-_\om q_{\sg\om}$ and  $\nu_\om(\cdot) = \lm^+_\om \nu_{\sg\om}(\cL_\om (\cdot))$, with
	$\int \log \lm_\om^+dm= \int \log \lm_\om^-dm$. The multipliers\footnote{It will be shown that
		$\lm_\om^-= \frac{ \nu_\om(q_\om)\lm_\om^+ }{ \nu_{\sg\om}(q_{\sg\om})}$, see \eqref{eq:lamnu}.} $\{\lm^\pm_\om\}_{\om\in\Om}$ also satisfy \eqref{eq:equiv-} and \eqref{eq:confmeas}.
	Define $\mu_\om$ by $\int f d\mu_\om := \frac{\int f q_\om d\nu_\om}{\nu_\om(q_\om)}$. Then, 
	\begin{equation*}
		\int f   d\mu_{\sg \om} =  \int f \circ T_\om d\mu_\om,
	\end{equation*}
	and $\{\mu_\om\}_{\om\in\Om}$ yields the unique relative equilibrium state for the system.
	Furthermore, there exist   $0<r<1$ and a measurable, tempered\footnote{A function $a:\Om \to \RR$ is \emph{tempered} if for \maeom, $\lim_{|n|\to\infty} \frac1n \log |a(\sg^n\om)|=0$.  Equivalently, for every $\ep>0$ there exists $A_\om>0$ such that  for every $n\in \NN$, $a(\sg^n\om)\leq A_\om e^{\ep|n|}$.} $C_\om>0$ such that for every $f\in L^1(\nu_\om)$, $\~f\in L^1(\nu_{\sg^n\om})$, and $h \in BV$, 
	\begin{equation*}
		\big| \mu_{\sg^{-n}\om}(f\circ  T^{(n)}_{\sg^{-n}\om} \cdot h)  - \mu_{\om}(f)\mu_{\sg^{-n}\om}(h)\big|
		\leq C_\om\|f\|_{L^1(\nu_{\om})} \|h\|_{BV} r^n, \quad \text{and}
	\end{equation*}
	\begin{equation*}
		\big| \mu_\om(\~f\circ  T^{(n)}_\om \cdot h)  - \mu_{\sg^n\om}(\~f) \mu_{\om}(h)\big|
		\leq C_\om\|\~f\|_{L^1(\nu_{\sg^n\om})} \|h\|_{BV} r^n.
	\end{equation*}
\end{thm}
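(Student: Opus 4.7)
The plan is to adapt the Birkhoff--Hilbert cone-contraction method of Liverani--Saussol--Vaienti \cite{liverani_conformal_1998} and its extension to non-mixing systems by Liverani--Maume-Deschamps \cite{liverani_maume_2003} to the random open and closed setting. Since the strong contraction hypothesis holds only on average, all quantitative parameters -- cone widths, numbers of iterates needed for contraction, projective diameters -- will be random and controlled via Birkhoff's ergodic theorem rather than through uniform deterministic bounds.

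First I would introduce a measurable family of cones $\cC_\om \subset \BV$ of nonnegative functions whose variation is controlled by a tempered multiple of their essential infimum, i.e.\ $\var(f) \le a_\om \Einf(f)$ for a suitable measurable $a_\om > 0$. Combining a Lasota--Yorke type inequality for $\cL_\om$ with the strongly contracting potential condition of Definition~\ref{def:cp}, I would show that after a measurable number of iterates $n_\om$, $\cL^{(n_\om)}_\om(\cC_\om) \subset \cC_{\sg^{n_\om}\om}$ with projective diameter bounded by some $\om$-measurable $D_\om < \infty$. Birkhoff's theorem on cones then yields strict projective contraction by a factor $\tanh(D_\om/4) < 1$, and iterating along the $\sg$-orbit of $\om$ converts this into an almost-sure asymptotic rate $r := \exp\!\left(\int \log \tanh(D_\om/4)\, dm\right) < 1$.

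Applied to the sequence $q_\om^{(n)} := \cL^{(n)}_{\sg^{-n}\om} 1 / \Einf(\cL^{(n)}_{\sg^{-n}\om} 1)$, which lies in $\cC_\om$ once $n$ is large along the backward orbit, the contraction produces a Cauchy sequence in projective (hence $\BV$) metric with limit $q_\om > 0$ of bounded variation; the equivariance $\cL_\om q_\om = \lm^-_\om q_{\sg\om}$ is read off the defining limit. A dual argument applied to the positive functionals $f \mapsto \Einf(\cL^{(n)}_\om f)/\Einf(\cL^{(n)}_\om 1)$ produces the conformal family $\nu_\om$ with $\nu_\om(\cdot) = \lm^+_\om \nu_{\sg\om}(\cL_\om(\cdot))$. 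Pairing and integrating against the $\sg$-invariant measure $m$, telescoping delivers both $\int \log \lm^+_\om\, dm = \int \log \lm^-_\om\, dm$ and the pointwise identity \eqref{eq:lamnu}. The invariance $\int f\, d\mu_{\sg\om} = \int f \circ T_\om\, d\mu_\om$ then follows directly from the equivariance of $(q_\om, \nu_\om)$ together with the duality between $\cL_\om$ and composition by $T_\om$. For the decay of correlations, I would apply the cone contraction to $\cL^{(n)}_{\sg^{-n}\om}(h q_{\sg^{-n}\om})$ versus the correctly scaled multiple of $q_\om$, writing the correlation difference as a $\nu_\om$-integral of $f$ against this projectively small remainder; the exponential factor $r^n$ is uniform while the $\om$-dependence is absorbed into a tempered prefactor $C_\om$. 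The second estimate follows by a symmetric argument exchanging past and future.

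The main obstacle I expect is closing the argument in a \emph{genuinely} non-transitive setting, where the usual covering tricks are unavailable and cone parameters cannot be chosen uniformly. Simultaneously producing measurable, tempered $a_\om$, $n_\om$ and $D_\om$ so that $\cL^{(n_\om)}_\om \cC_\om$ actually lies inside $\cC_{\sg^{n_\om}\om}$ with finite projective diameter, and verifying that $\int \log \tanh(D_\om/4)\, dm$ is strictly negative rather than merely nonpositive, will require a careful probabilistic reading of the strongly contracting potential hypothesis. Uniqueness of the relative equilibrium state is likewise subtle: lacking a spectral gap, it must be extracted from the cone contraction itself, by arguing that any other equivariant pair must lie in a common invariant cone and therefore be projectively identified with $(q_\om, \nu_\om)$.
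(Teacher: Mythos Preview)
Your proposal follows essentially the same Birkhoff--Hilbert cone strategy as the paper, and the overall architecture (random cones indexed by a tempered parameter $a_\om$, construction of $q_\om$ and $\nu_\om$ as projective limits, the identity \eqref{eq:lamnu}, decay of correlations via cone contraction applied to $\cL_{\sg^{-n}\om}^{(n)}(h q_{\sg^{-n}\om})$) matches. A few technical points where the paper's execution differs from your sketch are worth noting.

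First, you propose a \emph{random} waiting time $n_\om$; the paper instead exploits that Definition~\ref{def:cp} already hands you a \emph{fixed} $n_*$ with $\int\log c_{\om,n_*}\,dm<0$, and then constructs $a_\om$ explicitly as the solution of the twisted cohomological equation $\gamma a_{\sg^{n_*}\om}=c_{\om,n_*}a_\om+d_{\om,n_*}$, namely $a_\om=\sum_{j\ge 0}\gamma^{-j-1}d_{\sg^{-j-1}\om,n_*}\prod_{k=1}^{j}c_{\sg^{-k}\om,n_*}$. This formula is what makes $a_\om$ (hence the diameter $\Delta_\om$) tempered, and it resolves the obstacle you flag about ``simultaneously producing measurable, tempered $a_\om,n_\om,D_\om$''. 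Second, the paper does \emph{not} compute the contraction rate as $\exp\bigl(\int\log\tanh(D_\om/4)\,dm\bigr)$; instead it fixes a threshold $D$ so that $m(\{\Delta_\om\le D\})>1-\epsilon$ and uses ergodicity of $\sg$ to guarantee the orbit visits this good set with frequency $>1-\epsilon$, yielding $\vartheta>(\tanh(D/4))^{1-\epsilon}$. Your Birkhoff-sum approach also works here (since $\log\tanh(\Delta_\om/4)$ is bounded once $a_\om\ge 1$), but the paper's argument sidesteps any integrability check. Third, uniqueness of the relative equilibrium state is not extracted from the cone contraction as you suggest; the paper obtains the variational characterisation by quoting the argument of \cite[Theorem~2.23]{AFGTV1}, and uniqueness of the Ruelle--Perron--Frobenius tuple $(\psi_\om,\nu_\om,\lm_\om)$ is obtained separately via the multiplicative ergodic theorem of \cite{FLQ2} (Theorem~\ref{thm:randompf}), under mild additional topological assumptions on $(\Om,\sg)$.
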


In \S\ref{S:nonCovering},
we show that our results indeed apply to  non-transitive, non-mixing and non-covering maps;  see Figure~\ref{fig:ncmap} and Example~\ref{ex:nonmixing}. This is not a trivial example because, depending on the potential, the random invariant measures may or may not be supported inside the invariant interval around 1/2.
As a special case, 
we also show (Lemma~\ref{lem:contracting4geompot}) that when the geometric potential $-\log |T'_\om|$ is strongly contracting, the random map is in fact covering. 
In particular, $-\log |T'_\om|$ is not strongly contracting in Example~\ref{ex:nonmixing}.
Our results also apply to open and closed   random intermittent maps (\S\ref{S:intermittent}), and allow us to investigate escape rates for random open systems (\S\ref{S:escapeRates}).

\begin{figure}[ht]
	\centering	
	\begin{tikzpicture}
		\begin{axis}[clip=false,
			xmin=0,
			xmax=1.001,samples=351,domain=0:1,
			ymin=0,
			ymax=1.001,
			xtick={.1,.2,.25,.3,.4,.5,.6,.7,.8,.9},
			xticklabels={},
			extra x ticks={0,1},
			extra x tick labels={0,1},
			ytick={1},
			every axis plot/.append style={ultra thick}
			]
			\addplot[domain=0:1, dashed] {x};
			
			\draw[dashed] (40,40) rectangle (60,60); 
			\addplot[domain=.4:.5,blue] {2*x-.4};
			\addplot[domain=.5:.6,blue] {-2*x+1.6};

			\addplot[domain=0:.1,blue] {1-10*x}; 
			\addplot[domain=.1:.2,blue] {1/.06*x*(x-.1)};
			\addplot[domain=.25:.3,blue] {1/.06*x*(x-.1)};
			\addplot[domain=.3:.4,blue] {10*(x-.3)};
			\addplot[domain=.6:.7,blue] {-10*(x-.6)+1};
			\addplot[domain=.7:.8,blue] {(1/(.1^(3/4)))*(x-.7)^(3/4)};
			\addplot[domain=.9:1,blue] {(1/(.1^(3/4)))*(1-x)^(3/4)};

			\draw [line width=4pt, red] (20,0) -- (25,0);
			\draw [line width=1pt, red, dashed] (20,0) -- (20,100);
			\draw [line width=1pt, red, dashed] (25,0) -- (25,100);			
			\draw [line width=4pt, red] (80,0) -- (90,0);
			\draw [line width=1pt, red, dashed] (80,0) -- (90,0);
			\draw [line width=1pt, red, dashed] (80,0) -- (80,100);
			\draw [line width=1pt, red, dashed] (90,0) -- (90,100);
			
			\draw[decorate,decoration={brace,amplitude=8pt,mirror}] (20,-3)  -- (25,-3) ; 
			\draw[decorate,decoration={brace,amplitude=8pt,mirror}] (80,-3)  -- (90,-3) ; 
			\draw[decorate,decoration={brace,amplitude=8pt,mirror}] (22.5,-8)  -- (85,-8) ; 
			\node at (53.75, -18){$H_\omega$};
		\end{axis}
	\end{tikzpicture}\caption{A non-transitive open map}\label{fig:ncmap}
\end{figure}
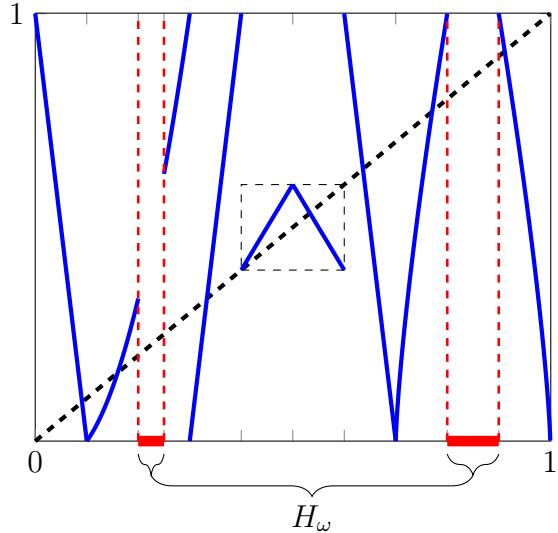

In contrast to previous works requiring the identification of a (random) conformal measure first,  our approach decouples the construction
of equivariant densities, $q_\om$, and conformal measures, $\nu_\om$, and builds these dual objects in a symmetric fashion.
In short, densities depend on the past, while measures depend on the future.
An extra element arising in the random setting is that, unlike in the autonomous case, the forward and backward multipliers $\lm_\om^\pm$ arising from these constructions are not necessarily equal, and so the densities may not be normalized with respect to the conformal measures. Thus, to find a (random) invariant measure $\mu_\om$, one should normalize: i.e.\  $\mu_\om=\frac{q_\om \nu_\om}{\nu_\om(q_\om)}$. 

This work complements previous works of the authors \cite{AFGTV1,AFGTV2},
where they have developed a general thermodynamic formalism for random open and closed dynamical systems, without the strongly contracting assumption of this work, but imposing covering type conditions.
The present approach also incorporates 
the use of a random family of cones, a strategy previously used in \cite{kifer_thermodynamic_2008}, the references therein, and recently in  \cite{stadlbauer_suzuki_varandas_2020}.

\section{Notation and setting}\label{S:setting}

The following notation will be used throughout the paper.
Let $\IO\subset \RR$ be a compact interval. For $Z\subset \IO$, we denote by $\Einf_Z (f)$ the \emph{essential infimum} of $f$ on $Z$, with respect to the Lebesgue measure. We also write $\Einf(f)$ instead of $\Einf_{\IO} (f)$, and define $\Einf_{\emptyset} (f)=0$. Similar conventions apply to the \emph{essential supremum}  $\Esup_Z(f)$. 
Let the \emph{variation} of $f$ on $Z$ be
$\text{var}_{Z}(f)=\underset{x_0<\dots <x_k, \, x_j\in Z}{\sup}\sum_{j=0}^{k-1}\absval{f(x_{j+1})-f(x_j)}$, and  $\var(f):=\var_{\IO}(f)$.
Let $BV\subset L^\infty(\leb)$ 
be the set of (equivalence classes of) functions of \emph{bounded variation} on $\IO$, with norm
$\|f\|_{BV}:=\underset{\tilde{f}=f \ \text{Leb a.e.}}{\inf} \text{var}_{\IO}(\tilde{f})+\|f\|_\infty$, where $\|f\|_\infty:= \Esup(f)$.
It follows from Rychlik \cite{rychlik_bounded_1983} that BV is a Banach space, and that
if $f$ is a function of bounded variation, then it is always possible to choose a representative of minimal variation.
From now on, we will work with such representatives, and will no longer distinguish between functions of bounded variation and their equivalence classes in BV.
Furthermore, we recall that  two functions of bounded variation $f, \tilde{f}:\IO \to \RR$  coincide Lebesgue almost everywhere if and only if the values of $f$ and $\tilde{f}$ differ in an at most countable set. Thus, if two BV functions coincide Lebesgue almost everywhere, then they are also equivalent with respect to any other non-atomic measure.

Let $$T_1, T_2, \dots: \IO \to \IO$$ be a countable collection of 
\emph{maps} such that for each $j\in \NN$ there exists a finite partition of $I \pmod{\text{Leb}}$  such that $T_j$ is monotonic and continuous on each atom.
Let   
$$H_1, H_2, \dots \subset \IO$$ be such that for each $j\in \NN$, $H_j\subset \IO$ is a (possibly empty) finite union of intervals, called \emph{holes}.
Assume\footnote{This assumption rules out the possibility of \emph{periodicity}, and
	is used to control infima in our arguments.} that for every $j \in \NN$ there is at least one full branch of $T_j$ completely contained in  $X_j:=\IO\setminus H_j$. 
Consider \emph{weights} of bounded variation 
$$g_1, g_2, \dots :\IO \to \RR^+, \quad j=1,2,\dots,$$
with associated \emph{potentials}  $\phi_j:=\log g_j$. 

Let $(\Om, m)$ be a complete probability space,
and $\sg:(\Om, m)\to (\Om, m)$ be an ergodic, invertible, probability preserving transformation, called the \emph{driving system}.
Let $\Om=\cup_{j=1}^\infty \Om_j$ be an (at most) countable partition of $\Om$ into measurable sets. For each $\om \in \Om_j$, let $T_\om=T_j, H_\om=H_j, X_\om=X_j, g_\om=g_j$. 
These assumptions ensure the quantities involved in the definition of strongly contracting potential (Definition~\ref{def:cp}) are measurable.
We refer to $\{(T_\om,H_\om)\}$ as a \emph{random open map},
and to $\{T_\om\}$ (or $\{(T_\om, \emptyset) \}$) as a \emph{random closed map}.

For each $\om\in\Om$ and $n\in \NN$,  let $T_\om^{(n)}:=T_{\sg^{n-1}\om} \circ \dots \circ T_{\sg \om} \circ T_\om$, $T_\om^{(0)}:=Id$, and $g_\om^{(n)}:=g_{\sg^{n-1}\om}  \dots  g_{\sg \om}  g_\om$.  Let $\cZ_{\om}^{(n)}$ be the monotonicity partition of $T_\om^{(n)}$, and
$\mathring{\cZ}_{\om}^{(n)}$ be the coarsest partition of the survivor set $X_{\om,n}:=\cap_{j=0}^{n-1} (T_{\om}^{(j)})^{-1} (X_{\sg^j\om})$ into intervals, such that for each $Z\in\mathring{\cZ}_{\om}^{(n)}$  there exists $Z'\in\cZ_{\om}^{(n)}$ such that $Z\subset Z'$.
We  split $\mathring{\cZ}_{\om}^{(n)}$ into $\mathring{\cZ}_{\om,f}^{(n)}$ and $\mathring{\cZ}_{\om,p}^{(n)}$, corresponding to the full and non-full (or partial) branches of $T_\om^{(n)}|_{X_{\om,n}}$. That is, $Z\in \mathring{\cZ}_{\om,f}^{(n)}$ if and only if $T_\om^{(n)}(Z)=\IO$. A collection of intervals $Z_1, \dots, Z_k \in \mathring{\cZ}_{\om,p}^{(n)}$ is said to be a collection of \textit{contiguous non-full intervals} for $T_\om^{(n)}$ (or, more precisely of $(T_\om^{(n)}, H_{\om,n})$, where $H_{\om,n}:=\IO\setminus X_{\om,n}$) if there is no element of $\mathring{\cZ}_{\om,f}^{(n)}$ in between them\footnote{This condition has been considered in \cite[\S6]{liverani_maume_2003}.}; that is, if  the convex hull of $\cup_{j=1}^k Z_j$ does not contain any element of $\mathring{\cZ}_{\om,f}^{(n)}$. 
We denote by $b_{\om,f}^{(n)}$ the cardinality of $\mathring{\cZ}_{\om,f}^{(n)}$ and by $\xi_{\om}^{(n)}$ the largest number of contiguous
non-full (or partial) intervals for $T_\om^{(n)}$.

The transfer operator for the random (open or closed) map $\{(T_\om, H_\om)\}_{\om \in \Om}$ with potential $\{\log g_\om\}_{\om \in \Om}$\footnote{In the sequel, we will exclude the sub-index $\om\in \Om$ from the notation, and write e.g. $\{\log g_\om\}$.}, acting on $f\in BV$ is defined by:
\[ \cL_{\om}f = \sum_{Z\in\mathring{\cZ}_{\om}^{(1)}} \ind_{T_\om(Z)} \lt((f g_{\om})\circ T_{\om,Z}^{-1}\rt),\]
where 	$	
T_{\om,Z}^{-1}:T_\om(Z)\to Z
$ 
is the inverse of $T_\om|_Z$.
Its $n$ step iteration, $\cL_{\om}^{(n)}f:=\cL_{\sg^{n-1}\om} \circ \dots \circ \cL_{\sg \om} \circ \cL_\om$,	is given by
\[ \cL_{\om}^{(n)}f = \sum_{Z\in\mathring{\cZ}_{\om}^{(n)}} \ind_{T_\om^{(n)}(Z)} \lt((f g_{\om}^{(n)})\circ T_{\om,Z}^{-n}\rt),\]
where 	$	
T_{\om,Z}^{-n}:T_\om^{(n)}(Z)\to Z
$ 
is the inverse of $T_\om^{(n)}|_Z$.

\section{Basic estimates}
The estimates in this section generalize arguments developed in  \cite{liverani_maume_2003}. 
\subsection{Infimum estimates} A direct estimate yields,  for every $\om\in\Om$,  $f\in\BV$, and  $n\in\NN$,
\begin{align*}
	\sum_{Z\in \mathring{\cZ}_{\om,f}^{(n)}} \Einf_Z|f| &\leq b_{\om,f}^{(n)} \big(\var(f) + \Einf(|f|) \big).
\end{align*}
By comparing the infimum over $Z\in \mathring{\cZ}_{\om,p}^{(n)}$ with the infimum over its closest full-branch neighbor, one gets
\begin{align}\label{eq:sumNF}
	\sum_{Z\in \mathring{\cZ}_{\om,p}^{(n)}} \Einf_Z|f| &\leq 2\xi_{\om}^{(n)} \left(\var(f)+\sum_{Z\in\mathring{\cZ}_{\om,f}^{(n)}}\Einf_Z|f| \right).
\end{align}
Furthermore, if $f\geq0$,
\begin{align}\label{eq:infLf}
	\Einf({\cL_{\om}^{(n)}f}) \geq \sum_{Z\in \mathring{\cZ}_{\om,f}^{(n)}} \Einf_Z (g_{\om}^{(n)} f ) \geq \Einf_{X_{\om,n}}(g_{\om}^{(n)}) \sum_{Z\in \mathring{\cZ}_{\om,f}^{(n)}} \Einf_Z f \geq b_{\om,f}^{(n)}  \Einf_{X_{\om,n}}(g_{\om}^{(n)})   \Einf (f).
\end{align}

\subsection{Variation estimates and Lasota--Yorke inequality} 
For every $\om\in\Om$,  $f\in\BV$, and  $n\in\NN$, we have
\begin{align*}
	\var(\cL_{\om}^{(n)}f)\leq \sum_{Z\in\mathring{\cZ}_{\om}^{(n)}}\var\lt(\ind_{T_\om^{(n)}(Z)} \lt((f g_{\om}^{(n)})\circ T_{\om,Z}^{-n}\rt)\rt).
\end{align*}
For each $Z\in\mathring{\cZ}_{\om}^{(n)}$ we have 
\begin{align}
	&\var\lt(\ind_{T_\om^{(n)}(Z)} \lt((f g_{\om}^{(n)})\circ T_{\om,Z}^{-n}\rt)\rt)
	\leq \var_Z(f g_{\om}^{(n)})+2\Esup_Z\absval{f g_{\om}^{(n)}}
	\nonumber\\
	&\qquad\qquad\leq 3\var_Z(f g_{\om}^{(n)})+2\Einf_Z\absval{f g_{\om}^{(n)}}
	\nonumber\\
	&\qquad\qquad\leq 3\norm{g_{\om}^{(n)}}_{\infty}\var_Z(f)+3\Esup_Z|f|\var_Z(g_{\om}^{(n)})+2\norm{g_{\om}^{(n)}}_{\infty}\Einf_Z|f|.
	\label{eq:var ineq over partition}
\end{align}
An inductive argument starting from the bound $\var(f h)\leq \var(f)\|h\|_\infty + \var(h)\|f\|_\infty$, and considering that $T_\om^{(n)}$ is monotonic on $Z$, yields
\begin{equation*}\label{eq:vargn}
	\var_Z(g_\om^{(n)}) \leq \|g_\om\|_\infty^{(n)} \sum_{j=0}^{n-1} \frac{\var(g_{\sg^j\om})}{\|g_{\sg^j\om}\|_\infty},
\end{equation*}
where $\|g_\om\|_\infty^{(n)}:=\prod_{j=0}^{n-1}\|g_{\sg^j\om}\|_\infty$.
Let $\tilde{S}_{n,\om}(g):=\sum_{j=0}^{n-1} \frac{\var(g_{\sg^j\om})}{\|g_{\sg^j\om}\|_\infty}$.
Therefore, \eqref{eq:var ineq over partition} yields
\begin{align*}
	\var\lt(\ind_{T_\om^{(n)}(Z)} \lt((f g_{\om}^{(n)})\circ T_{\om,Z}^{-n}\rt)\rt)
	&\leq \big(3+3 \tilde{S}_{n,\om}(g)\big)\norm{g_{\om}}^{(n)}_{\infty} \var_Z(f)
	\\
	&+\big(2+3\tilde{S}_{n,\om}(g)\big) \norm{g_{\om}}^{(n)}_{\infty}\Einf_Z|f|.\nonumber
\end{align*}
Thus,
\begin{align*}
	\var({\cL^{(n)}_{\om} f}) &\leq 
	\big(3+3 \tilde{S}_{n,\om}(g)\big)
	\norm{g_{\om}}^{(n)}_{\infty}\var(f)\\
	&+	\big(2+3\tilde{S}_{n,\om}(g)\big) \norm{g_{\om}}^{(n)}_{\infty}
	\lt(\sum_{Z\in\mathring{\cZ}_{\om,f}^{(n)}}\Einf_Z|f|+\sum_{Z\in\mathring{\cZ}_{\om,p}^{(n)}}\Einf_Z|f|\rt).
\end{align*}	
Grouping as in \eqref{eq:sumNF}, one gets
\begin{align*}
	\var(\cL_{\om}^{(n)}f)&\leq 
	\big(3+3 \tilde{S}_{n,\om}(g)\big) (1+2\xi_{\om}^{(n)})
	\norm{g_{\om}}^{(n)}_{\infty}\var(f)\\
	\\
	&+\big(2+3\tilde{S}_{n,\om}(g)\big) (1+2\xi_{\om}^{(n)}) \norm{g_{\om}}^{(n)}_{\infty}
	\sum_{Z\in\cZ_{\om,f}^{(n)}}\Einf_Z|f|.
\end{align*}
Furthermore, if $f\geq0$, \eqref{eq:infLf} implies
\begin{align} \label{eq:LY+}
	\var(\cL_{\om}^{(n)}f)&\leq 
	\big(3+3 \tilde{S}_{n,\om}(g)\big) (1+2\xi_{\om}^{(n)})
	\norm{g_{\om}}^{(n)}_{\infty} \lt( \var(f) +  \frac{\Einf({\cL_{\om}^{(n)}f}) }
	{ \Einf_{X_{\om,n}}(g_{\om}^{(n)}) } \rt).
\end{align}

\section{(Strictly) invariant cones and strongly contracting potentials}\label{sec:invCones}
Given $a>0$, we consider the cones
\[
\cC_{a} =\lt \{ f\in \BV: f > 0, \var(f)\leq a \Einf (f) \rt\}\subset \BV.
\]
This is a positive, convex cone with non-empty interior. Also, $\cC_{a} \cup \{0\}$ is closed. Let $\preceq_{a}$ be the partial order induced by $\cC_{a}$. That is, $f\preceq_{a} g$ iff $f-g \in \cC_{a}\cup\{0\}$.
Then, $(\BV,\preceq_{a})$ is integrally closed\footnote{$(V, \preceq)$ is integrally closed if for every $\al_n \to \al \in \RR$, $f, g \in V$ such that $0\preceq f,g$  and $\al_n f \preceq g$, $\al f \preceq g$.}. In addition, every $f\in \BV$ may be written as $f=f_1-f_2$ such that $f_1, f_2\in \cC_{a}$, for instance, by choosing $f_1=f+c, f_2=c$ for sufficiently large $c>0$.

The inequalities \eqref{eq:infLf} and \eqref{eq:LY+} yield the following. 
\begin{lemma}
	If $f\in \cC_{a}$ and $n\in \NN$, then  
	$\cL_{\om}^{(n)}f \in \cC_{a'}$, with 
	\begin{equation}\label{eq:cndn}
		a'= \big(3+3 \tilde{S}_{n,\om}(g)\big) (1+2\xi_{\om}^{(n)})
		\frac{\norm{g_{\om}}^{(n)}_{\infty}}{\Einf_{X_{\om,n}}(g_{\om}^{(n)})} \lt( \frac{a}{b_{\om,f}^{(n)} } +  1 \rt)
		=: c_{\om,n} a + d_{\omega,n}.
	\end{equation}
\end{lemma}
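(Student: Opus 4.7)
The plan is to combine the Lasota--Yorke inequality \eqref{eq:LY+} with the infimum lower bound \eqref{eq:infLf} and the cone condition $\var(f)\leq a\,\Einf(f)$, to eliminate $\var(f)$ and $\Einf(f)$ from the right-hand side in favor of $\Einf(\cL_\om^{(n)}f)$.

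First, I would verify that $\cL_\om^{(n)}f$ lies in $\BV$ and is strictly positive. Positivity is immediate from \eqref{eq:infLf}: since $f\in\cC_a$ means $f>0$, hence $\Einf(f)>0$, and the standing assumption that every $T_j$ has a full branch inside $X_j$ propagates inductively to give $b_{\om,f}^{(n)}\ge 1$; combined with $g_\om^{(n)}>0$, we get $\Einf(\cL_\om^{(n)}f)>0$. Taking the minimal-variation representative (as per the convention in \S\ref{S:setting}) yields a pointwise positive BV function.

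Second, I would bound the variation. Since $f\in\cC_a$, the cone inequality gives $\var(f)\le a\,\Einf(f)$. Using \eqref{eq:infLf} in the form
\[
\Einf(f)\;\le\;\frac{\Einf(\cL_\om^{(n)}f)}{b_{\om,f}^{(n)}\,\Einf_{X_{\om,n}}(g_\om^{(n)})},
\]
we get $\var(f)\le \tfrac{a}{b_{\om,f}^{(n)}}\cdot\tfrac{\Einf(\cL_\om^{(n)}f)}{\Einf_{X_{\om,n}}(g_\om^{(n)})}$. Plugging this into \eqref{eq:LY+}, the bracketed factor becomes
\[
\var(f)+\frac{\Einf(\cL_\om^{(n)}f)}{\Einf_{X_{\om,n}}(g_\om^{(n)})}\;\le\;\left(\frac{a}{b_{\om,f}^{(n)}}+1\right)\frac{\Einf(\cL_\om^{(n)}f)}{\Einf_{X_{\om,n}}(g_\om^{(n)})},
\]
and therefore
\[
\var(\cL_\om^{(n)}f)\;\le\;\bigl(3+3\tilde S_{n,\om}(g)\bigr)(1+2\xi_\om^{(n)})\,\frac{\|g_\om\|_\infty^{(n)}}{\Einf_{X_{\om,n}}(g_\om^{(n)})}\left(\frac{a}{b_{\om,f}^{(n)}}+1\right)\Einf(\cL_\om^{(n)}f)\;=\;a'\,\Einf(\cL_\om^{(n)}f),
\]
which is precisely the defining inequality of $\cC_{a'}$ with $a'$ as in \eqref{eq:cndn}.

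No step is really an obstacle here: the lemma is a bookkeeping consequence of the two estimates established in \S3. The only subtlety worth flagging is ensuring $b_{\om,f}^{(n)}\ge 1$ so that \eqref{eq:infLf} genuinely yields a strictly positive lower bound on $\Einf(\cL_\om^{(n)}f)$ and the division in the final chain is legitimate; this is guaranteed by the standing hypothesis in \S\ref{S:setting} that every $T_j$ has a full branch contained in $X_j$, which passes to $T_\om^{(n)}$ by induction.
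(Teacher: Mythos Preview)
Your proof is correct and follows exactly the approach intended by the paper: the lemma is stated there as a direct consequence of \eqref{eq:infLf} and \eqref{eq:LY+}, and your bookkeeping---bounding $\var(f)\le a\,\Einf(f)\le \tfrac{a}{b_{\om,f}^{(n)}}\cdot\tfrac{\Einf(\cL_\om^{(n)}f)}{\Einf_{X_{\om,n}}(g_\om^{(n)})}$ and substituting into \eqref{eq:LY+}---is precisely how these combine. Your remark that $b_{\om,f}^{(n)}\ge 1$ (ensuring strict positivity of $\cL_\om^{(n)}f$) follows from the standing full-branch assumption is a useful clarification that the paper leaves implicit.
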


The  next definition will be key for our arguments, as it allows for the construction of an invariant family of random cones, using ideas going back to Kifer  \cite{kifer_thermodynamic_2008}; see also \cite{stadlbauer_suzuki_varandas_2020}. 

\begin{definition}\label{def:cp}
	We say $\{\log g_\om\}$ is a  \emph{(random) strongly contracting potential} for the random (open or closed) map $\{(T_\om, H_\om)\}$ if
	$\log \#\mathring{\cZ}_{\om}, \log \|g_\om\|_\infty, \log   \Einf(g_{\om}),  \frac{\var(g_{\om})}{\|g_{\om}\|_\infty} \in L^1(m)$ and
	there exists $n_*>0$ such that
	$\int \log c_{\om,n_*} dm<0$, where $c_{\om,n}$ is defined in \eqref{eq:cndn}.
\end{definition}

\begin{remark}
	This condition is related to, but more restrictive than, the definitions of contracting potential in \cite{liverani_conformal_1998} (autonomous setting) and  \cite[Definition 2.15]{AFGTV1}, \cite[(Q1)]{AFGTV2} (random setting). On the other hand, \cite{liverani_conformal_1998, AFGTV1, AFGTV2} also require a covering condition, which is not required in this work.
	In \cite{stadlbauer_suzuki_varandas_2020}, the authors investigate random (closed) 
	non-uniformly expanding $C^1$
	maps with $C^1$ potentials satisfying a contracting-like condition. In Remark~\ref{rmk:ssv}, we show that,  in the one-dimensional setting, this condition is more restrictive than that of Definition~\ref{def:cp}.
\end{remark}
\begin{lemma}\label{lem:contractingCones}
	Assume $\{\log g_\om\}$ is a random  strongly contracting potential for the random (open or closed) map $\{(T_\om, H_\om)\}$.
	Then, there exists $n_*\in \NN$, $0<\gm<1$ and a 
	family of cones $(\cC_{a_\om})_{\om \in \Om}$
	which is invariant under $\cL_{\om}^{(n_*)}$ and satisfies
	$\cL_{\om}^{(n_*)}\cC_{a_\om} \subset \cC_{\gm a_{\sg^{n_*}\om}}$. 
	Furthermore  $a_\om$ may be chosen as in \eqref{eq:aom}, and therefore it may be assumed to be tempered.
\end{lemma}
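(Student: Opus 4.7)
The plan is to explicitly construct $a_\om$ as a backward series solving the cocycle inequality suggested by the preceding lemma. Writing $\tilde\sg := \sg^{n_*}$, $c_\om := c_{\om,n_*}$, and $d_\om := d_{\om,n_*}$ for brevity, the preceding lemma reduces the task to producing a measurable positive function $a_\om$ and a constant $\gm\in(0,1)$ satisfying $c_\om a_\om + d_\om \leq \gm\, a_{\tilde\sg\om}$ almost surely, since such a pair immediately gives $\cL_\om^{(n_*)}\cC_{a_\om}\subset \cC_{\gm\, a_{\tilde\sg\om}}$.

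First I would pick $\gm\in(0,1)$ with $\log \gm^{-1}$ strictly exceeding the essential supremum of the $\tilde\sg$-Birkhoff limit of $\log c_\om$; this is possible (after enlarging $n_*$ to a suitable multiple of itself if needed, so that $\sg^{n_*}$ is ergodic on each of its cyclic components) because $\int \log c_\om \, dm<0$ by Definition~\ref{def:cp}. I then set
\begin{equation}\label{eq:aom}
a_\om := \sum_{k=0}^{\infty} \gm^{-(k+1)}\, d_{\tilde\sg^{-(k+1)}\om}\, \prod_{j=1}^{k} c_{\tilde\sg^{-j}\om},
\end{equation}
with the empty-product convention $\prod_{j=1}^{0}(\cdot) := 1$. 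A direct re-indexing of \eqref{eq:aom} produces the exact identity $\gm\, a_{\tilde\sg\om} = d_\om + c_\om\, a_\om$, which is precisely the desired contraction relation.

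The core remaining task is to show that \eqref{eq:aom} converges a.s.\ to a finite, tempered function. Definition~\ref{def:cp} ensures $\log c_\om, \log d_\om \in L^1(m)$, so a Borel--Cantelli argument using $\tilde\sg$-invariance of $m$ gives $\frac{1}{k}\log d_{\tilde\sg^{-k}\om}\to 0$ almost surely, while Birkhoff's ergodic theorem applied to $\tilde\sg$ yields $\frac{1}{k}\sum_{j=1}^{k}\log c_{\tilde\sg^{-j}\om}$ converging a.s.\ to a function bounded above by $\log \gm^{-1}$ by the choice of $\gm$. Hence the $k$-th summand in \eqref{eq:aom} decays exponentially in $k$ and the series converges; by the same Birkhoff estimate one gets $\frac{1}{n}\log a_{\sg^n\om}\to 0$, establishing temperedness. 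The main obstacle throughout is the subtlety that $\tilde\sg=\sg^{n_*}$ is not a priori ergodic even though $\sg$ is: this is handled by the standard observation that for an ergodic invertible $\sg$, the $\tilde\sg$-ergodic components are cyclically permuted by $\sg$, so that enlarging $n_*$ appropriately yields a single $\gm\in(0,1)$ working uniformly on all components simultaneously.
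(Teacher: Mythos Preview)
Your approach is essentially identical to the paper's: both solve the twisted cohomological equation $\gm\, a_{\sg^{n_*}\om}=c_{\om,n_*}\,a_\om+d_{\om,n_*}$ by the same backward series \eqref{eq:aom}, and both deduce a.s.\ finiteness and temperedness from the $L^1$ assumptions on $\log c_{\om,n_*}$ and $\log d_{\om,n_*}$ via Birkhoff-type control of the summands. The paper phrases the convergence slightly differently (bounding $d_{\sg^{-j-1}\om,n_*}\le D_\om e^{\ep j}$ and $\prod_{k=1}^{j}c_{\sg^{-k}\om,n_*}\le C_\om e^{\ep j}\tilde\gm^{j}$ by tempered $C_\om,D_\om$, then summing a geometric series), but this is the same mechanism as your Borel--Cantelli plus Birkhoff argument.

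The one place where you go beyond the paper is in worrying about the possible non-ergodicity of $\tilde\sg=\sg^{n_*}$. This is a legitimate subtlety: the $\tilde\sg$-Birkhoff limit of $\log c_{\om,n_*}$ is the conditional expectation on the $\tilde\sg$-invariant $\sigma$-algebra, and even though the cyclic components are permuted by $\sg$, the component-wise averages of $\log c_{\om,n_*}$ can genuinely differ (and some could be non-negative) while the global integral is negative. Your proposed fix of ``enlarging $n_*$ to a suitable multiple'' does not resolve this, since passing to $\sg^{Mn_*}$ neither forces ergodicity nor equalises the component-wise averages, and it is not clear that $\int\log c_{\om,Mn_*}\,dm<0$ follows from the corresponding inequality at $n_*$. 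The paper does not address this point either; it simply declares $n_*=1$ immediately after the lemma, at which stage $\sg$ itself is ergodic and the issue disappears.
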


\begin{proof}
	The hypotheses ensure there exists $n_*\in \NN$ such that $\int \log c_{\om,n_*} dm<0$, where $c_{\om,n_*}$ is defined in \eqref{eq:cndn}. Thus, one can find $0<\gm<1$ such that $\int \log c_{\om,n_*} d\,m=:\log \tilde{\gm}<\log \gm<0$. Then, it follows that the twisted cohomological equation $\gm a_{\sg^{n_*}\om}=c_{\om,n_*} a_{\om} + d_{\om,n_*}$ has a measurable, $m$-almost surely finite solution given by 
	\begin{equation}\label{eq:aom}
		a_\om= \sum_{j=0}^\infty \gm^{-j-1} d_{\sg^{-j-1}\om,n_*} \prod_{k=1}^j c_{\sg^{-k}\om,n_*},
	\end{equation}
	where, for convenience, we let $\Pi_{k=1}^0 c_{\sg^{-k}\om,n_*}:=1$.

	The fact that $a_\om$ is $m$-almost surely finite and tempered is a consequence of the integrability assumptions in Definition~\ref{def:cp}, combined with sub-multiplicativity of $1/\Einf(g_{\om}^{(n)})$. Indeed,
	notice that $b_{\om,f}^{(n)}, \xi_{\om}^{(n)} \leq \prod_{j=0}^{n-1}\#\mathring{\cZ}_{\sg^j\om}$.
	Hence, $d_{\om,n_*}$ is log-integrable, where $d_{\om,n_*}$ is defined in \eqref{eq:cndn}. Hence, there exists $\ep>0$ satisfying  $e^{2\ep}\tilde{\gamma}\le\al\gamma$ for $0<\al<1$ and a tempered measurable function $D_\om$ such that $d_{\sg^{-j-1}\om,n_*}\leq D_\om e^{\ep j}$.  
	Similarly, there is a tempered measurable function $C_\om$ such that $\prod_{k=1}^j c_{\sg^{-k}\om,n_*}\leq C_\om e^{j\ep} \tilde{\gm}^j$.
	Therefore, substituting into \eqref{eq:aom}, we get
	$
	a_\om \leq C_\om D_\om/ \big(\gm-e^{2\ep} \tilde{\gm}\big)
	\leq C_\om D_\om /(\gamma(1-\alpha))
	$
	is tempered.
	It is straightforward to verify that $\cL_{\om}^{(n_*)}\cC_{a_\om} \subset \cC_{\gm a_{\sg^{n_*}\om}}$.
\end{proof}

\subsection{Contraction of projective metric} 
In the setting of Lemma~\ref{lem:contractingCones},
let $\preceq_\om$ be the partial order induced by $\cC_{a_\om}$. That is, $f\preceq_\om g$ iff $f-g \in \cC_{a_\om}\cup\{0\}$.
Let  $\Ta_{\om}$ be the  Hilbert (projective) pseudo metric on $\cC_{a_\om}$, given by 
\begin{align*}
	\Ta_\om(f,h):=\log\frac{\rho_\om(f,h)}{\tau_\om(f,h)},
\end{align*}
where $f,g \in \cC_{a_\om}$, 
$\tau_\om(f,h):=\sup\set{\lm>0: \lm f\preceq_\om h} 
\text{ and }
\rho_\om(f,h):=\inf\set{\mu>0: \mu f\succeq_\om h}$; the distance is infinite if the numerator is $\infty$ or the denominator is 0.
\begin{lemma}	\label{lem:finiteDiam}
	Assume $0<\gm<1$ and $f\in\cC_{\gm a_\om}$. Then,
	\begin{equation}\label{eq:findiam}
		\Ta_\om(f,1)\leq \log \frac{1+\gm (a_\om+1)}{1-\gm}=:\Delta_\om/2.
	\end{equation}
	Thus, the diameter of $\cC_{\gm a_\om}$ as a subset of $\cC_{a_\om}$ is at most $\Dl_\om<\infty$.
\end{lemma}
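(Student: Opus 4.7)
The plan is to prove the two assertions in order, with the second following from the first by the triangle inequality for the Hilbert pseudo-metric. The key point is that comparisons with the constant function $1 \in \cC_{a_\om}$ reduce to explicit inequalities between $\Einf(f)$, $\Esup(f)$ and $\var(f)$.

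First I would unwind the definitions of $\tau_\om(f,1)$ and $\rho_\om(f,1)$. Having $\lambda\cdot 1 \preceq_\om f$ means $f - \lambda \in \cC_{a_\om}\cup\{0\}$. If $\lambda<\Einf(f)$, then $f-\lambda>0$ and $\Einf(f-\lambda)=\Einf(f)-\lambda$, while $\var(f-\lambda)=\var(f)$. So the cone condition becomes $\var(f)\leq a_\om(\Einf(f)-\lambda)$. Since $f\in\cC_{\gm a_\om}$ gives $\var(f)\leq \gm a_\om \Einf(f)$, any $\lambda\leq (1-\gm)\Einf(f)$ works, so
\[
\tau_\om(f,1)\;\geq\;(1-\gm)\Einf(f).
\]
Dually, $\mu\cdot 1 \succeq_\om f$ means $\mu - f \in \cC_{a_\om}\cup\{0\}$, which (when $\mu>\Esup(f)$) becomes $\var(f)\leq a_\om(\mu - \Esup(f))$, i.e.\ $\mu\geq \Esup(f)+\var(f)/a_\om$. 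Using $\Esup(f)\leq \Einf(f)+\var(f)\leq (1+\gm a_\om)\Einf(f)$ and $\var(f)/a_\om\leq \gm \Einf(f)$, this supplies a candidate
\[
\rho_\om(f,1)\;\leq\;\Esup(f)+\var(f)/a_\om\;\leq\;\bigl(1+\gm(a_\om+1)\bigr)\Einf(f).
\]

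Combining the two bounds, the $\Einf(f)$ factors cancel and one obtains
\[
\Ta_\om(f,1)\;=\;\log\frac{\rho_\om(f,1)}{\tau_\om(f,1)}\;\leq\;\log\frac{1+\gm(a_\om+1)}{1-\gm}\;=\;\Dl_\om/2,
\]
which is exactly \eqref{eq:findiam}. For the second claim, I would note that $1\in\cC_{a_\om}$ (since $\var(1)=0$), so the Hilbert pseudo-metric satisfies the triangle inequality on $\cC_{a_\om}$; hence for any $f,h\in\cC_{\gm a_\om}$,
\[
\Ta_\om(f,h)\;\leq\;\Ta_\om(f,1)+\Ta_\om(1,h)\;\leq\;2\cdot\Dl_\om/2\;=\;\Dl_\om,
\]
which is finite because $a_\om<\infty$ $m$-a.s.\ by Lemma~\ref{lem:contractingCones}.

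There is essentially no obstacle here; the only subtlety is the boundary case $\var(f)=0$ (i.e.\ $f$ constant), where one needs to remember that $\cC_{a_\om}\cup\{0\}$ includes zero so the infima defining $\tau_\om,\rho_\om$ are attained, and the claimed bounds hold trivially. I would flag this briefly and then conclude.
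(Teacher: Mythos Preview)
Your argument is correct and follows the paper's proof essentially line by line: bound $\tau$ from below via $\lambda\leq(1-\gm)\Einf(f)$, bound $\rho$ from above via $\mu\geq(1+\gm(a_\om+1))\Einf(f)$, and take the ratio. One small notational slip: with the paper's convention $\tau_\om(f,h)=\sup\{\lambda:\lambda f\preceq_\om h\}$, the quantities you compute are actually $\tau_\om(1,f)$ and $\rho_\om(1,f)$ rather than $\tau_\om(f,1)$ and $\rho_\om(f,1)$; this is harmless since the Hilbert metric is symmetric, so the ratio and the resulting bound on $\Ta_\om(f,1)=\Ta_\om(1,f)$ are unaffected.
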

\begin{proof}
	Let $f\in\cC_{\gm a_\om}$.
	First, $\lm \preceq_\om f$ if and only if $\lm \leq \Einf (f)$ and $\var(f)=\var(f-\lm)\leq a_\om \Einf(f-\lm)$.
	This happens if $\lm\leq (1-\gm)\Einf f$.
	Also, $f \preceq_\om \mu$ if and only if $\|f\|_\infty \leq \mu$ and $\var(f)=\var(\mu-f)\leq a_\om \Einf(\mu-f)$. Since $\var(f)\leq \gm a_\om \Einf(f)$ and $\|f\|_{\infty}\leq (1+\gm a_\om)\Einf(f)$, this happens if $\mu\geq(\gm+1+\gm a_\om)\Einf(f)$. Thus, we conclude that
	$
	\Ta_\om(f,1)\leq \log \frac{1+\gm (a_\om+1)}{1-\gm},
	$
	as claimed.
\end{proof}

\begin{lemma}\label{lem:coneCont}
	Under the hypotheses of Lemma~\ref{lem:contractingCones}, there exists  $0<\vartheta<1$ such that for every $k\geq 0$, and \maeom,
	\begin{equation}\label{eq:coneContraction}
		\Ta_{\om}(\cL_{\sg^{-n_*l}\om}^{n_*l} f,\cL_{\sg^{-n_*(l+k)}\om}^{n_*(l+k)} h) \leq  \Ta_{\sg^{-ln_*}\om}( f,\cL_{\sg^{-kn_*}\om}^{kn_*} h) \vartheta^l ,
	\end{equation}
	for every sufficiently large $l$ (depending on $\om$), every $f\in \cC_{a_{\sg^{-ln_*}\om}}$ and every $h\in \cC_{a_{\sg^{-n_*(l+k)}\om}}$. 
\end{lemma}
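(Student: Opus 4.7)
The plan is to reduce the claim to an estimate between two elements in a single cone, apply Birkhoff's projective metric contraction principle iteratively, and then extract a uniform exponential rate from an ergodic-theoretic argument.

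For the reduction, write $\cL_{\sg^{-n_*(l+k)}\om}^{n_*(l+k)} h = \cL_{\sg^{-n_*l}\om}^{n_*l}(\~h)$ where $\~h := \cL_{\sg^{-n_*(l+k)}\om}^{n_*k} h$. Applying Lemma~\ref{lem:contractingCones} $k$ times yields $\~h \in \cC_{\gm^k a_{\sg^{-n_*l}\om}} \sub \cC_{a_{\sg^{-n_*l}\om}}$, so both $f$ and $\~h$ lie in $\cC_{a_{\sg^{-n_*l}\om}}$, where the pseudo-metric $\Ta_{\sg^{-n_*l}\om}(f, \~h)$ is defined. Thus \eqref{eq:coneContraction} reduces to showing $\Ta_\om(\cL_{\sg^{-n_*l}\om}^{n_*l} f, \cL_{\sg^{-n_*l}\om}^{n_*l} \~h) \leq \vta^l \, \Ta_{\sg^{-n_*l}\om}(f, \~h)$.

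For the contraction step, I invoke the classical Birkhoff principle: if a positive linear operator sends one convex cone into a subset of Hilbert diameter at most $\Dl$ of a second cone, then it contracts the corresponding projective metric by a factor of at most $\tanh(\Dl/4)$. By Lemma~\ref{lem:contractingCones} each $\cL^{(n_*)}_{\sg^{-jn_*}\om}$ maps $\cC_{a_{\sg^{-jn_*}\om}}$ into $\cC_{\gm a_{\sg^{-(j-1)n_*}\om}}$, which by Lemma~\ref{lem:finiteDiam} has Hilbert diameter at most $\Dl_{\sg^{-(j-1)n_*}\om}$ inside $\cC_{a_{\sg^{-(j-1)n_*}\om}}$. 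Iterating $l$ times,
\[
\Ta_\om(\cL_{\sg^{-n_*l}\om}^{n_*l} f, \cL_{\sg^{-n_*l}\om}^{n_*l} \~h) \leq \Ta_{\sg^{-n_*l}\om}(f, \~h)\prod_{j=0}^{l-1} \tanh\lt(\Dl_{\sg^{-jn_*}\om}/4\rt).
\]
For the exponential bound on the product, the identity $\tanh(\tfrac12\log u) = (u-1)/(u+1)$ applied to \eqref{eq:findiam} gives $\tanh(\Dl_\om/4) = \gm(a_\om+2)/(2+\gm a_\om) \in [\gm, 1)$. Thus $\log\tanh(\Dl_\om/4)$ is uniformly bounded, hence lies in $L^1(m)$, and is strictly negative $m$-a.e.\ (since $a_\om<\infty$ a.s.). Birkhoff's ergodic theorem applied to the $m$-preserving map $\sg^{-n_*}$ yields
\[
\frac{1}{l}\sum_{j=0}^{l-1} \log\tanh\lt(\Dl_{\sg^{-jn_*}\om}/4\rt) \lra \bar F(\om) \quad m\text{-a.e.,}
\]
with $\bar F$ being $\sg^{n_*}$-invariant and $\int \bar F \, dm<0$.

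The main obstacle is producing a \emph{uniform} $\vta$, since $\sg^{n_*}$ need not be ergodic and $\bar F$ need not be constant. I resolve this using that $\sg$ is ergodic and invertible, which implies the $\sg^{n_*}$-invariant sub-$\sg$-algebra $\sI_{n_*}$ has at most $n_*$ atoms (the induced action of $\sg$ on the $L^2$ subspace of $\sI_{n_*}$-measurable functions is an ergodic unitary whose $n_*$-th power is the identity, so this subspace has dimension at most $n_*$). Consequently $\bar F$ is $\sI_{n_*}$-measurable, takes only finitely many strictly negative values, and $\alpha := \esssup \bar F < 0$. Choosing any $\vta \in (e^\alpha, 1)$ ensures $\prod_{j=0}^{l-1}\tanh(\Dl_{\sg^{-jn_*}\om}/4) \leq \vta^l$ for $m$-a.e.\ $\om$ and all sufficiently large $l$, which completes the proof.
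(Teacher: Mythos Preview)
Your proof is correct. The reduction and the iterated Birkhoff contraction step coincide with the paper's setup, but your extraction of a uniform $\vta$ differs from the paper's argument.

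The paper fixes a threshold $D$ with $m(\{\Dl_\om\le D\})>1-\ep/n_*$, uses that each $\cL^{(n_*)}$-step is a weak contraction and a strict contraction by $\tanh(D/4)$ whenever $\Dl_\om\le D$, and then appeals to ergodicity of $\sg$ to control the frequency of ``good'' steps; this yields any $\vta>(\tanh(D/4))^{1-\ep}$. Your route instead computes $\tanh(\Dl_\om/4)=\gm(a_\om+2)/(2+\gm a_\om)\in[\gm,1)$ explicitly, so $\log\tanh(\Dl_\om/4)$ is bounded and strictly negative a.e., and then invokes Birkhoff's ergodic theorem for $\sg^{-n_*}$ together with the finiteness of the $\sg^{n_*}$-ergodic decomposition to get $\esssup \bar F<0$. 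Your argument is more quantitative and makes the handling of possible non-ergodicity of $\sg^{n_*}$ fully explicit; the paper's ``good set'' argument is shorter and avoids discussing the ergodic decomposition, at the cost of leaving implicit why the choice $m(\cdot)>1-\ep/n_*$ suffices. Both approaches give essentially the same range of admissible $\vta$.
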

\begin{proof}
	Lemma~\ref{lem:contractingCones} implies $\cL_{\sg^{-n_*l}\om}^{(n_*)}\cC_{a_{\sg^{-n_*l}\om}, \sg^{-n_*l}\om} \subset \cC_{\gm a_{\sg^{-n_*(l-1)}\om}}$ and Lemma~\ref{lem:finiteDiam} implies $\diam \lt (\cL_{\sg^{-n_*l}\om}^{(n_*)}\cC_{a_{\sg^{-n_*l}\om}} \rt) \leq \Dl_{\sg^{-n_*(l-1)}\om}$, where
	$\Dl_\om$ is as in \eqref{eq:findiam}. Let
	$\ep>0$ and $D \in \RR$ be such that $m(\{\om\in \Om : \Dl_\om \leq D\})>1-\ep/n_*$. 
	Recall the projective metric is weakly contracted by $\cL_\om^{(n_*)}$ for \maeom, and, once the diameter of the image is finite, it is strictly contracted by a factor of $\tanh(\tfrac{D}{4})$ whenever $\Dl_{\om}<D$. Hence, by ergodicity of $\sg$, \eqref{eq:coneContraction} holds for sufficiently large $l$, provided $\vartheta>(\tanh(\tfrac{D}{4}))^{1-\ep}$.
\end{proof}

\begin{remark}
	For simplicity and clarity of presentation, we assume from now on that $$n_*=1.$$  \cite{AFGTV1,AFGTV2} address the possibility of $n_*>1$ in a related setting. 
\end{remark}

\section{Construction of equivariant densities and conformal measures}\label{S:constructions}

In this section, we construct equivariant densities and conformal measures for the random map $\{(T_\om, H_\om)\}$ with
strongly contracting potential $\{\log g_\om\}$.
We point out that these constructions are completely decoupled, in contrast to the standard approach of establishing the existence of conformal measures first, and using them to build the densities. 
(See Remark~\ref{rmk:Comparison} for further details on this comparison.)

Note that the norm $\|f\|_\infty$ is compatible with $\preceq_\om$. That is, for all $f,h\in BV$, if $-f\preceq_\om h \preceq_\om f$ then $\|h\|_{\infty}\leq \|f\|_{\infty}$.
Also, the function $\Einf: \cC_{a_\om} \to \RR_+$ is homogeneous and $\preceq_\om$ preserving.
Hence, as in \cite[Lemma 2.2]{liverani_conformal_1998}, 
for every $f, h \in \cC_{a_\om}$ such that $\Einf f=\Einf h>0$, we have
\begin{equation}\label{eq:compDist}
	\| f-h\|_{\infty}\leq (e^{ \Ta_{\om}(f,h)}-1) \min (\|f\|_{\infty}, \|h\|_{\infty}).
\end{equation}

\subsection{Equivariant densities}\label{S:equivDens} 
In this section, we show the following.
\begin{lemma}\label{lem:eqdens}
	Assume $\{\log g_\om\}$ is a  strongly contracting potential for the random (open or closed) map $\{(T_\om, H_\om)\}$, and $a_\om$ is as in \eqref{eq:aom}. Then,
	\begin{enumerate}[(i)]
		\item \label{it:eqdens1}
		For each $f\in \cC_{1}$ the sequence $\frac{{\nothing}\cL_{\sg^{-n}\om}^{(n)} f}{ \Einf(\cL_{\sg^{-n}\om}^{(n)} f)}$ is Cauchy with respect to $\|\cdot\|_\infty$. 
		Hence, the following limit exists:
		$
		q_\om^f:= \lim_{n\to\infty}\frac{{\nothing}\cL_{\sg^{-n}\om}^{(n)} f}{ \Einf(\cL_{\sg^{-n}\om}^{(n)} f)}$.
		Furthermore, 
		$ \Einf  (q_\om^f)=1  {\text{ and }}  \var(q_\om^f)\leq \gm a_\om.$
		In addition,
		$\cL_\om q_\om^f=\lm_\om^f q_{\sg\om}^f$,
		with $\lm_\om^f=\Einf(\cL_\om q_\om^f)$.
		\item \label{it:eqdens2}
		The functions $q_\om^f$ and multipliers $\lm_\om^f$ are independent of $f$. Call them $q_\om$ and $\lm^-_\om$, respectively. 
		Then, $\cL_\om q_\om = \lm^-_\om q_{\sg\om}$,
		\begin{equation}\label{eq:equiv-}
			\begin{split}
				q_\om= {\nothing}\lim_{n\to\infty}\frac{\cL_{\sg^{-n}\om}^{(n)} 1}{ \Einf(\cL_{\sg^{-n}\om}^{(n)} 1)},\qquad
				\lm^-_\om= \lim_{n\to\infty}\frac{\Einf(\cL_{\sg^{-n}\om}^{(n+1)} 1)}{ \Einf(\cL_{\sg^{-n}\om}^{(n)} 1)}=\Einf (\cL_\om q_\om).
			\end{split}
		\end{equation}
	\end{enumerate}
\end{lemma}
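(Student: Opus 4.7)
The plan is to realize $q_\om^f$ as a uniform limit of the normalized backward iterates $f_n := \cL_{\sg^{-n}\om}^{(n)} f / \Einf(\cL_{\sg^{-n}\om}^{(n)} f)$, by combining the projective-metric contraction of Lemma~\ref{lem:coneCont} with the comparison estimate \eqref{eq:compDist}, and then to verify the independence-of-$f$ claim by running the same argument on a pair of initial conditions.

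First I would note that $a_\om \geq 1$ \maeom\ (from \eqref{eq:aom}, the $j=0$ summand already contributes $\gm^{-1} d_{\sg^{-1}\om,n_*} > 1$), so $\cC_1 \sub \cC_{a_{\sg^{-n}\om}}$ for all $n$. By Lemma~\ref{lem:contractingCones}, one iteration sends both $\cL_{\sg^{-n}\om} f$ and $\cL_{\sg^{-(n+k)}\om}^{(k+1)} f$ into $\cC_{\gm a_{\sg^{-(n-1)}\om}}$, so Lemma~\ref{lem:finiteDiam} bounds their $\Ta_{\sg^{-(n-1)}\om}$-distance by $\Dl_{\sg^{-(n-1)}\om}$. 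Factoring $\cL_{\sg^{-n}\om}^{(n)} = \cL_{\sg^{-(n-1)}\om}^{(n-1)} \circ \cL_{\sg^{-n}\om}$ and similarly for length $n+k$, Lemma~\ref{lem:coneCont} (used with $k := 0$ in its statement and $l := n-1$) yields, for all $n$ sufficiently large and all $k \geq 0$,
\begin{equation*}
\Ta_\om\bigl(\cL_{\sg^{-n}\om}^{(n)} f,\; \cL_{\sg^{-(n+k)}\om}^{(n+k)} f\bigr) \;\leq\; \Dl_{\sg^{-(n-1)}\om}\, \vartheta^{n-1}.
\end{equation*}
Temperedness of $\Dl_\om$ (from temperedness of $a_\om$ per Lemma~\ref{lem:contractingCones}) drives the right side to $0$ \maeom. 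Since the Hilbert metric is scale-invariant, the same bound applies to the normalized $(f_n)$; using $\Einf(f_n) = 1$ and $\|f_n\|_\infty \leq 1 + \gm a_\om$, the estimate \eqref{eq:compDist} upgrades projective convergence to $\|f_n - f_{n+k}\|_\infty \to 0$, uniformly in $k$. Hence $(f_n)$ is Cauchy in $\|\cdot\|_\infty$ and converges to some $q_\om^f$; lower semicontinuity of $\var$ and continuity of $\Einf$ under uniform convergence deliver $\Einf(q_\om^f) = 1$ and $\var(q_\om^f) \leq \gm a_\om$.

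For equivariance, the identity $\cL_\om f_n = \tfrac{\Einf(\cL_{\sg^{-n}\om}^{(n+1)} f)}{\Einf(\cL_{\sg^{-n}\om}^{(n)} f)}\, f_{n+1}^{\sg\om}$, with $f_{n+1}^{\sg\om}$ the analogous normalized iterate centred at $\sg\om$, holds by definition; passing to the limit using continuity of $\cL_\om$ on $(\BV, \|\cdot\|_\infty)$ (immediate from $\|\cL_\om h\|_\infty \leq \#\mathring{\cZ}_\om \|g_\om\|_\infty \|h\|_\infty$) yields $\cL_\om q_\om^f = \lm_\om^f q_{\sg\om}^f$, and applying $\Einf$ with $\Einf(q_{\sg\om}^f) = 1$ identifies $\lm_\om^f = \Einf(\cL_\om q_\om^f)$. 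Part~(ii) follows by the same one-step reduction applied to $f, g \in \cC_1$: the projective distance $\Ta_\om(\cL_{\sg^{-n}\om}^{(n)} f, \cL_{\sg^{-n}\om}^{(n)} g) \leq \Dl_{\sg^{-(n-1)}\om}\vartheta^{n-1} \to 0$, and \eqref{eq:compDist} (again since the normalized iterates share $\Einf = 1$) forces their $\|\cdot\|_\infty$-difference to zero, hence $q_\om^f = q_\om^g =: q_\om$ and $\lm_\om^f = \lm_\om^g =: \lm_\om^-$. Specializing to $f = 1$ and reading off the prefactor recovers the explicit formulas in \eqref{eq:equiv-}.

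The main obstacle I anticipate is that Lemma~\ref{lem:coneCont} applies only for $l$ sufficiently large depending on $\om$, so the estimates are inherently pointwise in $\om$; one must rely on temperedness of $\Dl_{\sg^{-n}\om}$ to conclude that $\Dl_{\sg^{-(n-1)}\om}\vartheta^{n-1} \to 0$ \maeom, and must take care that the ``sufficiently large $n$'' is selected \maeom\ before constructing $q_\om^f$. The only other technicality is continuity of $\cL_\om$ in sup norm needed in the equivariance step, which is routine.
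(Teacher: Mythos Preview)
Your proposal is correct and follows essentially the same approach as the paper: normalize the backward iterates, use Lemma~\ref{lem:coneCont} together with Lemma~\ref{lem:finiteDiam} to get $\Ta_\om(f_n,f_m)\le \Dl_{\sg^{-(n-1)}\om}\vartheta^{n-1}$, invoke temperedness of $\Dl_\om$, and convert to $\|\cdot\|_\infty$ via \eqref{eq:compDist}. The only minor difference is in part~(ii): the paper compares the already-constructed limits $q_\om^f,q_\om^h$ using equivariance ($q_\om^f\propto \cL_{\sg^{-n}\om}^{(n)}q_{\sg^{-n}\om}^f$), whereas you compare the iterates $\cL_{\sg^{-n}\om}^{(n)}f$ and $\cL_{\sg^{-n}\om}^{(n)}g$ directly---your route is slightly more direct and avoids relying on (i), but both are the same idea.
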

\begin{proof}
	To show \ref{it:eqdens1},
	first note that \eqref{eq:aom} implies $a_\om\geq1$. Thus,  $\cC_{1}\subset \cC_{a_\om}$ for \maeom. 
	Let $f_n:=\frac{{\nothing}\cL_{\sg^{-n}\om}^{(n)} f}{ \Einf(\cL_{\sg^{-n}\om}^{(n)} f)}$. Using \eqref{eq:compDist}, we have, for $m>n\geq1$,
	\begin{equation}\label{eq:Cauchyfn}
		\|f_n -f_m\|_\infty \leq (e^{\Ta_{\om}(f_n,f_m)}-1) \|f_n\|_\infty.
	\end{equation}
	Since $f_n \in \cC_{\gm a_\om}$ and $\Einf f_n=1$, then $\|f_n\|_\infty\leq 1+ \gm a_\om$.
	On the other hand, by \eqref{eq:coneContraction}, for sufficiently large $n$, $\Ta_{\om}(f_n,f_m)\leq \Dl_{\sg^{-n+1}\om} \vartheta^{n}$, where $\Dl_\om$ is as in \eqref{eq:findiam} and $\vartheta<1$ is as in Lemma~\ref{lem:coneCont}. Since $a_\om$ is tempered, so is $\Dl_\om$, and \eqref{eq:Cauchyfn} tends to 0 exponentially as $n\to \infty$.
	Hence, the following limit exists in $L^\infty$:
	\begin{align*}
		q_\om^f&:= \lim_{n\to\infty}\frac{{\nothing}\cL_{\sg^{-n}\om}^{(n)} f}{ \Einf(\cL_{\sg^{-n}\om}^{(n)} f)}.
	\end{align*}
	Also,
	$\Einf  (q_\om^f)=1$ and $\var(q_\om^f)\leq \limsup \var(f_n)\leq \gm a_\om$.
	In addition,
	\begin{equation}\label{eq:equivDens}
		\begin{split}
			{\nothing}\cL_\om q_\om^f&=  \lim_{n\to\infty}\frac{{\nothing} \cL_{\sg^{-n}\om}^{(n+1)} f}{\Einf(\cL_{\sg^{-n}\om}^{(n+1)} f)}\frac{\Einf(\cL_{\sg^{-n}\om}^{(n+1)} f)}{ \Einf(\cL_{\sg^{-n}\om}^{(n)} f)}=  q_{\sg\om}^f \lim_{n\to\infty}\frac{\Einf(\cL_{\sg^{-n}\om}^{(n+1)} f)}{ \Einf(\cL_{\sg^{-n}\om}^{(n)} f)}=:\lm_\om^f q_{\sg\om}^f.
		\end{split}
	\end{equation}
	The normalization of $q_\om^f$ implies that $\lm_\om^f=\Einf(\cL_\om q_\om^f)$.

	To show \ref{it:eqdens2}, we show there exists $q_\om\in BV$ such that
	$q_\om=q_\om^f$ for every $f\in \cC_{1}$. 
	Indeed, for $f, h\in \cC_{1}$ we have, for every $n\in\NN$,
	\begin{equation}\label{eq:qomUnique}
		\|q_\om^f-q_\om^h\|_\infty \leq (e^{\Ta_{\om}(q_\om^f,q_\om^h)}-1) \|q_\om^f\|_\infty
		\leq (e^{\Ta_{\om}(\cL_{\sg^{-n}\om}^{(n)} q_{\sg^{-n}\om}^f,\cL_{\sg^{-n}\om}^{(n)} q_{\sg^{-n}\om}^h)}-1) \|q_\om^f\|_\infty.
	\end{equation}
	By \eqref{eq:coneContraction}, $\Ta_{\om}(\cL_{\sg^{-n}\om}^{(n)} q_{\sg^{-n}\om}^f,\cL_{\sg^{-n}\om}^{(n)} q_{\sg^{-n}\om}^h)\leq \Dl_{\sg^{-n+1}\om} \vartheta^{n-1}$ for sufficiently large $n$, and $\|q_\om^f\|_\infty\leq 1+\gm a_\om$. Using once again that $\Dl_\om$ is tempered, we conclude that the RHS of \eqref{eq:qomUnique} tends exponentially fast to 0 as $n\to\infty$. Thus, $q_\om^f=q_\om^h=:q_\om$.
	Hence, \eqref{eq:equivDens} implies that $\lm_\om^f$ is also independent of $f$, call it $\lm^-_\om$. Thus, \eqref{eq:equiv-} holds.
\end{proof}

\subsection{Equivariant conformal measures}\label{S:confMeas}
In this section, we show the following.
\begin{lemma}\label{lem:confmeas}
	Assume $\{\log g_\om\}$ is a   strongly contracting potential for the random (open or closed) map $\{(T_\om, H_\om)\}$. Then,
	for each $f\in \cC_{1}$, the sequence $\frac{\Einf(\cL_{\om}^{(n)} f)}{\Einf(\cL_{\om}^{(n)} 1)}$ is Cauchy.
	Its limit,
	\begin{equation}\label{eq:nu}
		\nu_\om(f):=\lim_{n\to\infty}\frac{\Einf(\cL_{\om}^{(n)} f)}{\Einf(\cL_{\om}^{(n)} 1)},
	\end{equation}
	defines a positive linear functional which can be extended by linearity to $BV$, and to a non-atomic probability measure with support contained in $X_\om$. 
	Furthermore, 
	$\nu_\om$ satisfies $\nu_{\sg\om}(\cL_\om f)=\lm^+_\om \nu_\om(f)$, where
	\begin{equation}\label{eq:confmeas}
		\lm^+_\om=\lim_{n\to\infty}\frac{\Einf_{\nothing} (\cL_{\om}^{(n+1)} 1)}{\Einf_{\nothing}(\cL_{\sg\om}^{(n)} 1)}=  \nu_{\sg\om}(\cL_\om 1).
	\end{equation}
\end{lemma}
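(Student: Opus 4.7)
The proof runs dual and symmetric to that of Lemma~\ref{lem:eqdens}: rather than iterating backward and normalising in $L^\infty$, we iterate $\cL_\om^{(n)}$ forward applied to $f$ and $1$ and track the ratio of essential infima. Set $F_n:=\cL_\om^{(n)}f$, $G_n:=\cL_\om^{(n)}1$, and $c_n(f):=\Einf(F_n)/\Einf(G_n)$; by Lemma~\ref{lem:contractingCones}, for $n\ge 1$ both $F_n,G_n\in\cC_{\gm a_{\sg^n\om}}$. The key observation is that if $\lm G_n\preceq_{\sg^n\om}F_n$ then $F_n-\lm G_n\ge 0$ pointwise, and applying the positive operator $\cL_{\sg^n\om}^{(k)}$ gives $F_{n+k}\ge\lm G_{n+k}$ pointwise, whence $\lm\le c_{n+k}(f)$; dualising with $\mu$ in place of $\lm$, both $c_n(f)$ and $c_{n+k}(f)$ lie in the interval $[\tau_{\sg^n\om}(G_n,F_n),\rho_{\sg^n\om}(G_n,F_n)]$, of multiplicative width $e^{\Ta_{\sg^n\om}(F_n,G_n)}$. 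Applying Lemma~\ref{lem:coneCont} (with $l=n$, $k=0$, $n_*=1$) gives $\Ta_{\sg^n\om}(F_n,G_n)\le\vartheta^n\Ta_\om(f,1)$ for sufficiently large $n$, where $\Ta_\om(f,1)$ is finite and independent of $n,k$. Combined with the tempered bound $c_n(f)\le\Esup(f)(1+\gm a_{\sg^n\om})$, this yields $|c_n(f)-c_{n+k}(f)|\to 0$ exponentially fast, uniformly in $k$, so $\nu_\om(f):=\lim_n c_n(f)$ exists.

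Positivity and normalisation $\nu_\om(1)=1$ are immediate. For additivity on $\cC_1$, I would mimic the $L^\infty$ approach of Lemma~\ref{lem:eqdens}: cone contraction together with \eqref{eq:compDist} yields $\cL_\om^{(n)}f=c_n(f)\cL_\om^{(n)}1+R_n(f)$ with $\|R_n(f)\|_\infty\le c_n(f)\tilde\delta_n\Einf(\cL_\om^{(n)}1)$ and $\tilde\delta_n\to 0$ exponentially. Using the elementary bound $|\Einf(u+v)-\Einf(u)|\le\|v\|_\infty$, summing over $f,g\in\cC_1$, and dividing by $\Einf(\cL_\om^{(n)}1)$ yields $|c_n(f+g)-c_n(f)-c_n(g)|\to 0$, so $\nu_\om$ is additive; homogeneity is immediate. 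Any $h\in BV$ decomposes as $(h+C)-C$ with both summands in $\cC_1$ for $C>\var(h)+\|h\|_\infty$, so $\nu_\om$ extends linearly to $BV$. Restricting to $C(\IO)\sub BV$ and invoking Riesz--Markov produces a Borel probability measure, still called $\nu_\om$.

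The support inclusion $\supp\nu_\om\sub X_\om$ follows because $\cL_\om\ind_{H_\om}\equiv 0$: each summand in the definition of $\cL_\om$ evaluates $\ind_{H_\om}$ at a point of some $Z\in\mathring{\cZ}_\om^{(1)}\sub X_\om$, giving zero; hence $\nu_\om(\ind_{H_\om})=0$. Non-atomicity is a consequence of the Einf-ratio construction depending only on the Lebesgue-equivalence class of $f$: since $\cL_\om^{(n)}f$ is determined by the $L^1(\leb)$-class of $f$, so is $\nu_\om(f)$, and therefore $\nu_\om(A)=0$ for every Lebesgue-null Borel set $A$; in particular $\nu_\om$ has no atoms. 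Equivariance is a direct computation using the semigroup property and inserting $\Einf(\cL_\om^{(n+1)}1)$ as a pivot:
\[
\nu_{\sg\om}(\cL_\om f)=\lim_n\frac{\Einf(\cL_\om^{(n+1)}f)}{\Einf(\cL_\om^{(n+1)}1)}\cdot\frac{\Einf(\cL_\om^{(n+1)}1)}{\Einf(\cL_{\sg\om}^{(n)}1)}=\nu_\om(f)\cdot\lm^+_\om,
\]
where the second factor defines $\lm^+_\om$ and converges by the same Cauchy argument applied to $\cL_\om 1\in\cC_{\gm a_{\sg\om}}$; setting $f=1$ identifies $\lm^+_\om=\nu_{\sg\om}(\cL_\om 1)$.

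The main obstacle is the Cauchy step, where the exponential decay $\vartheta^n$ must dominate the tempered growth of the bounds on $c_n(f)$ and $a_{\sg^n\om}$; this is handled exactly as in Lemma~\ref{lem:contractingCones}, by choosing $\ep$ small enough that $e^\ep\vartheta<1$. A subtler point, worth some care in the write-up, is verifying that the Riesz--Markov Borel measure reproduces the BV functional on non-continuous elements of $BV$, which is what makes the Lebesgue-equivalence-class invariance translate into non-atomicity of the measure itself.
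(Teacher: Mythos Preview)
Your Cauchy argument is correct and takes a cleaner route than the paper's. You trap both $c_n(f)$ and $c_{n+k}(f)$ in the interval $[\tau_{\sg^n\om}(G_n,F_n),\rho_{\sg^n\om}(G_n,F_n)]$ directly from the cone order and positivity of $\cL$, whereas the paper routes through the density $q_{\sg^n\om}$ of Lemma~\ref{lem:eqdens}: it shows $\big\|\cL_\om^{(n)}f/\Einf(\cL_\om^{(n)}f)-q_{\sg^n\om}\big\|_\infty\le C_\om r^n$ and from this deduces $\Einf(\cL_\om^{(n+k)}f)/\Einf(\cL_\om^{(n)}f)=\lm_{\sg^n\om}^{-,(k)}(1\pm C_\om r^n)$. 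Your approach is more self-contained; the paper's has the side benefit of making the link to $\lm^-$ explicit, which feeds into \eqref{eq:lamnu}. One technical remark: Lemma~\ref{lem:coneCont} is stated for backward iterates ending at a fixed $\om$, so ``$l=n$, $k=0$'' does not literally apply; you need its forward analogue, which follows from the same Birkhoff argument in its proof, with the ``sufficiently large'' threshold depending on $\om$ rather than on $\sg^n\om$.

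Your non-atomicity argument, however, is not merely in need of care---it is wrong. The assertion ``$\nu_\om(A)=0$ for every Lebesgue-null Borel set $A$'' amounts to $\nu_\om\ll\leb$, which fails in the open case: iterating the conformal relation shows $\supp\nu_\om\subset\bigcap_n X_{\om,n}$, typically a Leb-null Cantor set carrying full $\nu_\om$-mass. That the BV \emph{functional} $f\mapsto\nu_\om(f)$ depends only on the Leb-class of $f$ is tautological (BV elements \emph{are} Leb-classes), and for a Leb-null $A$ the BV element $\ind_A$ is simply $0$; this says nothing about the Riesz \emph{measure} $\tilde\nu_\om(A)$, which is determined via continuous test functions. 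The paper's argument is genuinely different and uses the strongly contracting hypothesis in an essential way: for $Z\in\mathring{\cZ}_\om^{(k)}$, injectivity of $T_\om^{(k)}|_Z$ gives $\|\cL_\om^{(k)}\ind_Z\|_\infty\le\|g_\om^{(k)}\|_\infty$, whence
\[
\nu_\om(\ind_Z)\le \frac{\|g_\om^{(k)}\|_\infty}{b_{\om,f}^{(k)}\,\Einf_{X_{\om,k}}(g_\om^{(k)})}\to 0
\]
by Kingman. This bound simultaneously yields non-atomicity and the agreement $\nu_\om(\ind_J)=\tilde\nu_\om(J)$ on intervals that you correctly flagged as the crux.
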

\begin{proof}
	To show the sequence is Cauchy, it suffices to show that there exists $C_\om>0$  such that for every $f\in\cC_{1}$,
	\begin{equation}\label{eq:cmcauchy}
		\lm_{\sg^n\om}^{-,(k)} (1 -\ C_\om r^n) \leq \frac{\Einf_{\nothing} (\cL_{\om}^{(n+k)} f)}{\Einf_{\nothing}(\cL_{\om}^{(n)} f)} \leq \lm_{\sg^n\om}^{-,(k)} (1 +\ C_\om r^n),
	\end{equation}
	where $\lm_{\om}^{-,(k)}:=\lm_\om^- \lm_{\sg\om}^-\cdots \lm_{\sg^{k-1}\om}^-$, $\lm_\om^->0$ is as in \eqref{eq:equiv-} and $\vartheta<r<1$, with $\vartheta$ is as in Lemma~\ref{lem:coneCont}.
	To see this, we argue as in \S\ref{S:equivDens}. Thus, there exists $C_\om>0$ such that 
	$\lt\| \frac{\nothing \cL_{\om}^{(n)} f}{\Einf(\cL_{\om}^{(n)} f)}- q_{\sg^n\om}\rt\|_\infty <\ C_\om r^n$. Hence,
	\begin{align*}
		\frac{\Einf_{\nothing} (\cL_{\om}^{(n+k)} f)}{\Einf(\cL_{\om}^{(n)} f)}&=
		\Einf_{\nothing} \cL_{\sg^n\om}^{(k)}
		\lt( \frac{\nothing \cL_{\om}^{(n)} f}{\Einf(\cL_{\om}^{(n)} f)}\rt) \\
		&\leq 
		\Einf_{\nothing} \lt( \cL_{\sg^n\om}^{(k)}
		\lt( q_{\sg^n\om} (1+ \ C_\om r^n) \rt) \rt) =\lm_{\sg^n\om}^{-,(k)} (1+ \ C_\om r^n),  
	\end{align*}
	where in the next to last step we have used that $\Einf_{\nothing} q_{\sg^n\om}=1$.
	The lower bound in \eqref{eq:cmcauchy} is obtained similarly.
	
	Let 
	$\nu_\om(f):=\lim_{n\to\infty}\frac{\Einf(\cL_{\om}^{(n)} f)}{\Einf(\cL_{\om}^{(n)} 1)}$.
	Positivity and linearity of $\nu_\om$ are clear. Since $\cC_{1}$ has non-empty interior, $\nu_\om$ can be extended by linearity to $BV$.
	Since $|\nu_\om(f)|\leq \|f\|_\infty$ and $\nu_\om(1)=1$, by the Riesz representation theorem, $\nu_\om$ gives rise to a probability measure $\tilde \nu_\om$, with  $\supp(\tilde \nu_\om)\subseteq X_\om$. 
	
	If $Z\in \mathring{\cZ}_{\om}^{(k)}$, then $T_\om^{(k)}|_Z: Z \to \IO$ is injective, so
	$\|\cL_{\om}^{(k)} \ind_Z \|_\infty \leq \|g_\om^{(k)}\|_{\infty}$. Thus, 
	\begin{align*}
		\nu_\om(\ind_Z) &=
		\lim_{n\to\infty}\frac{\Einf_{\nothing} (\cL_{\sg^k\om}^{(n-k)} \cL_{\om}^{(k)} \ind_Z)}{\Einf(\cL_{\om}^{(n)} 1)}
		\\
		&\leq \|g_\om^{(k)}\|_\infty \lim_{n\to\infty}\frac{\Einf_{\nothing} (\cL_{\sg^k\om}^{(n-k)} 1)}{\Einf(\cL_{\sigma^k\om}^{(n-k)} 1)\Einf(\cL_{\om}^{(k)} 1)}\leq
		\frac{\|g_\om^{(k)}\|_\infty}{b_{\om,f}^{(k)} \Einf_{X_{\om,k}}(g_\om^{(k)})}.
	\end{align*}
	Since $\{\log g_\om\}$ is strongly contracting, Kingman's subadditive ergodic theorem ensures the upper bound approaches 0 as $k\to\infty$. 
	Thus, $\lim_{k\to \infty}\max_{Z\in \mathring{\cZ}_{\om}^{(k)}} \nu_\om(\ind_Z)=0$.
	Hence, $\nu_\om$ is non-atomic, and standard approximation arguments ensure  that for every $J\subset I$, 
	$\nu_\om(\ind_J)=\tilde \nu_\om (J)$, so we also write $\nu_\om$ to refer to the measure $\tilde \nu_\om$.

	For the final claim, we have
	\begin{equation*}
		\begin{split}
			\nu_{\sg\om}(\cL_\om f)&=\lim_{n\to\infty}\frac{\Einf_{\nothing} (\cL_{\om}^{(n+1)} f)}{\Einf_{\nothing}(\cL_{\sg\om}^{(n)} 1)} \\
			&= \lim_{n\to\infty}\frac{\Einf_{\nothing} (\cL_{\om}^{(n+1)} f)}{\Einf_{\nothing}(\cL_{\om}^{(n+1)} 1)}
			\frac{\Einf_{\nothing} (\cL_{\om}^{(n+1)} 1)}{\Einf_{\nothing}(\cL_{\sg\om}^{(n)} 1)}= \nu_\om(f) \nu_{\sg\om}(\cL_\om 1).
		\end{split}
	\end{equation*}
\end{proof}

\begin{remark}\label{rmk:Comparison}
	The construction of conformal measures here may be regarded as a random version of that in \cite{liverani_conformal_1998}. On the other hand, the densities constructed in \cite{liverani_conformal_1998} differ from ours in the normalisation. If we denote their densities by $\tilde{q}_\om$, they are normalised so that $\nu_\om(\tilde{q}_\om)=1$. As it can be deduced from the upcoming \eqref{eq:lamnu}, this choice ensures that their corresponding multipliers, $\tilde{\lm}_\om$, satisfy $\tilde{\lm}_\om=\lm_\om^+$. 
\end{remark}

\section{Main results}
\subsection{Equilibrium states and exponential decay of correlations}\label{S:pfMainThm}
In this section we show the following.
\begin{theorem}\label{thm:qinvMeas}
	Assume  $\{\log g_\om=:\phi_\om\}$ is a   strongly contracting potential for the random (open or closed) map $\{(T_\om, H_\om)\}$.
	Let $\lm^\pm_\om$,
	$q_\om$ and  $\nu_\om$ and  be as in \S\ref{S:equivDens} and \S\ref{S:confMeas}. 
	Then\footnote{We will show in Theorem~\ref{thm:randompf} that  $\Lm_1=\lim_{n\to \infty}\frac1n \log\|\cL_\om^{(n)}\|_{\BV}$ for \maeom.}, 
	$\int \log \lm_\om^+dm= \int \log \lm_\om^-dm=:\Lm_1$.
	Define the probability measures $\mu_\om$ by $\int f d\mu_\om := \frac{\int f q_\om d\nu_\om}{\nu_\om(q_\om)}$. 
	Then,
	\begin{equation}\label{eq:invmeas}
		\int f   d\mu_{\sg \om} =  \int f \circ T_\om d\mu_\om.
	\end{equation}
	
	Furthermore, there exist a tempered $C_\om>0$ and  $0<r<1$ such that for every $f\in L^1(\nu_\om)$, $\~f\in L^1(\nu_{\sg^n\om})$, and $h \in BV$ 
	\begin{equation}
		\label{eq:docPast}
		\big| \mu_{\sg^{-n}\om}(f\circ  T^{(n)}_{\sg^{-n}\om} \cdot h)  - \mu_{\om}(f)\mu_{\sg^{-n}\om}(h)\big|
		\leq C_\om\|f\|_{L^1(\nu_{\om})} \|h\|_{BV} r^n, \quad \text{and}
	\end{equation}
	\begin{equation}
		\label{eq:docFuture}
		\big| \mu_\om(\~f\circ  T^{(n)}_\om \cdot h)  - \mu_{\sg^n\om}(\~f) \mu_{\om}(h)\big|
		\leq C_\om\|\~f\|_{L^1(\nu_{\sg^n\om})} \|h\|_{BV} r^n.
	\end{equation}
	In fact, \eqref{eq:docPast} and \eqref{eq:docFuture} hold for any choice $r>\vartheta$, with $\vartheta$ as in Lemma~\ref{lem:coneCont}.
\end{theorem}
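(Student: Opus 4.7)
The plan is to prove the three assertions in sequence, leveraging the equivariant families $q_\om$ and $\nu_\om$ constructed in \S\ref{S:equivDens}--\S\ref{S:confMeas} together with the projective contraction of Lemma~\ref{lem:coneCont}.

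\emph{Multiplier equality and invariance.} Applying $\nu_{\sg\om}$ to both sides of $\cL_\om q_\om = \lm_\om^- q_{\sg\om}$ and combining with the conformality relation $\nu_{\sg\om}(\cL_\om(\cdot)) = \lm_\om^+ \nu_\om(\cdot)$ immediately yields the identity
\begin{equation*}
	\lm_\om^- \nu_{\sg\om}(q_{\sg\om}) = \lm_\om^+ \nu_\om(q_\om),
\end{equation*}
which is \eqref{eq:lamnu}. Taking logarithms and integrating against the $\sg$-invariant measure $m$ produces $\int \log \lm_\om^- \, dm = \int \log \lm_\om^+ \, dm$, provided $\log \nu_\om(q_\om) \in L^1(m)$; this follows from $1 = \Einf(q_\om) \leq \nu_\om(q_\om) \leq 1 + \gm a_\om$ together with the log-integrability of the ingredients of $a_\om$ via \eqref{eq:aom} and Definition~\ref{def:cp}. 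The invariance \eqref{eq:invmeas} then follows by a short direct computation: starting from $\int f \, d\mu_{\sg\om} = \nu_{\sg\om}(f q_{\sg\om})/\nu_{\sg\om}(q_{\sg\om})$, substitute $q_{\sg\om} = \cL_\om q_\om/\lm_\om^-$, recognize $f \cL_\om q_\om = \cL_\om((f \circ T_\om) q_\om)$, apply conformality, and cancel $\lm_\om^+/\lm_\om^-$ via \eqref{eq:lamnu} to arrive at $\nu_\om((f \circ T_\om) q_\om)/\nu_\om(q_\om) = \int f \circ T_\om \, d\mu_\om$.

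\emph{Decay of correlations.} The central tool is the normalized transfer operator $\hcL_\om h := \cL_\om(h q_\om)/(\lm_\om^- q_{\sg\om})$, which by \eqref{eq:lamnu} satisfies $\hcL_\om 1 = 1$ and the $\mu$-duality $\int (g \circ T_\om) h \, d\mu_\om = \int g \, \hcL_\om h \, d\mu_{\sg\om}$. Iterating gives $\hcL_\om^{(n)} h = \cL_\om^{(n)}(h q_\om)/(\lm_\om^{-,(n)} q_{\sg^n \om})$, and both \eqref{eq:docPast} and \eqref{eq:docFuture} reduce---after taking absolute values and noting $d\mu_\om/d\nu_\om = q_\om/\nu_\om(q_\om) \leq 1 + \gm a_\om$---to the single $L^\infty$ estimate
\begin{equation*}
	\lt\| \hcL^{(n)}_{\sg^{-n}\om} h - \mu_{\sg^{-n}\om}(h) \rt\|_\infty \leq C_\om \|h\|_{BV} r^n
\end{equation*}
together with the analogous future-indexed bound. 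To prove it I would first replace $h$ by $h - \mu_{\sg^{-n}\om}(h)$, incurring only a bounded multiplicative loss in $\|h\|_{BV}$ and achieving $\nu_{\sg^{-n}\om}(h q_{\sg^{-n}\om}) = 0$; then choose a tempered $M = M(h,\om)$, linear in $\|h\|_{BV}$ and depending on $a_{\sg^{-n}\om}$, so that $\phi_1 := (M + h) q_{\sg^{-n}\om}$ and $\phi_2 := M q_{\sg^{-n}\om}$ both belong to $\cC_{\gm' a_{\sg^{-n}\om}}$ for some fixed $\gm' \in (\gm, 1)$. Lemmas~\ref{lem:finiteDiam} and \ref{lem:coneCont} together force
\begin{equation*}
	\Ta_\om \bigl( \cL^{(n)}_{\sg^{-n}\om}\phi_1, \, \cL^{(n)}_{\sg^{-n}\om}\phi_2 \bigr) \leq \tilde\Dl_{\sg^{-n}\om}\, \vartheta^n
\end{equation*}
for a tempered $\tilde\Dl_\om$. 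Because we arranged $\nu_{\sg^{-n}\om}(\phi_1) = \nu_{\sg^{-n}\om}(\phi_2)$, conformality propagates this to $\nu_\om(\cL^{(n)}_{\sg^{-n}\om}\phi_1) = \nu_\om(\cL^{(n)}_{\sg^{-n}\om}\phi_2)$, and a minor variant of \eqref{eq:compDist} (in which the two arguments have matching $\nu_\om$-integral rather than matching $\Einf$, proved by the same sandwich argument using positivity of $\nu_\om$) converts the projective bound into $\|\cL^{(n)}_{\sg^{-n}\om}(hq_{\sg^{-n}\om})\|_\infty \leq (e^{\tilde\Dl_{\sg^{-n}\om}\vartheta^n} - 1) \|\cL^{(n)}_{\sg^{-n}\om}\phi_1\|_\infty$. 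Since $\|\cL^{(n)}_{\sg^{-n}\om}\phi_1\|_\infty$ is controlled by $M \lm^{-,(n)}_{\sg^{-n}\om}$ up to tempered factors (via $\phi_1 \in \cC_{\gm' a_{\sg^{-n}\om}}$, conformality, and \eqref{eq:lamnu}), dividing through by $\lm_{\sg^{-n}\om}^{-,(n)} q_\om \geq \lm_{\sg^{-n}\om}^{-,(n)}$ yields the desired bound. The statement \eqref{eq:docFuture} follows from the symmetric argument obtained by replacing $\om$ with $\sg^n\om$ throughout.

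The main technical obstacle is bookkeeping the tempered quantities: $a_\om$, $\Dl_\om$, $\|q_\om\|_\infty$, and the cone-decomposition constant $M(h,\om)$ are all merely tempered, not uniformly bounded on $\Om$. One must verify that their product with $\vartheta^n$ still admits an exponential bound of any rate $r > \vartheta$ with measurable, tempered prefactor $C_\om$, which is achieved by absorbing the tempered growth $e^{\ep n}$ into the margin $\log(r/\vartheta)$, in the same spirit as the construction of $a_\om$ in the proof of Lemma~\ref{lem:contractingCones}.
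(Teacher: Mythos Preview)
Your proposal is correct and follows essentially the same route as the paper: the multiplier identity \eqref{eq:lamnu} and the invariance \eqref{eq:invmeas} are obtained exactly as you describe, and the decay estimate is proved via the cone contraction of Lemma~\ref{lem:coneCont} combined with the compatibility inequality \eqref{eq:compDist}. The only differences are cosmetic bookkeeping. The paper does not introduce $\hcL_\om$ explicitly; instead it keeps $f$ in the picture, writes the correlation as $\frac{1}{\lm^{-,(n)}_{\sg^{-n}\om}\nu_\om(q_\om)}\,\nu_\om\big(\cL^{(n)}_{\sg^{-n}\om}(hq_{\sg^{-n}\om})(f-\mu_\om(f))\big)$, and then adds and subtracts $\Einf\big(\cL^{(n)}_{\sg^{-n}\om}(hq_{\sg^{-n}\om})\big)q_\om$ so that one term vanishes by $\nu_\om(q_\om(f-\mu_\om(f)))=0$. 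For the surviving term the paper translates $h$ by a constant $c_h=\frac{1+2\sqrt\gm}{\sqrt\gm-\gm}\|h\|_{BV}$ (independent of $a_{\sg^{-n}\om}$, using $a_\om\ge 1$) to force $hq_{\sg^{-n}\om}\in\cC_{\sqrt\gm a_{\sg^{-n}\om}}$, and then applies \eqref{eq:compDist} directly with the $\Einf$-normalization. Your version---centering $h$ by $\mu_{\sg^{-n}\om}(h)$, splitting $hq$ as $\phi_1-\phi_2$, and invoking the $\nu_\om$-normalized variant of \eqref{eq:compDist}---is an equivalent rearrangement; the variant is indeed valid since $\nu_\om$ is homogeneous and $\preceq_\om$-preserving. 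Both approaches finish with the same tempered bookkeeping to pass from $\vartheta^n$ to any $r^n>\vartheta^n$.
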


\begin{remark}
	The quantity $\Lm_1$ in Theorem~\ref{thm:qinvMeas} is called the \emph{maximal Lyapunov exponent} of the cocycle generated by $\{\cL_\om\}$  in the context of multiplicative ergodic theory; and the \emph{expected pressure}, denoted by $\cEP(\phi)$, in the thermodynamic formalism approach. The proof of Theorem~\ref{thm:randompf} will show that the \emph{second Lyapunov exponent} of the cocycle  satisfies $\lm_2\leq \tanh(\tfrac{D}{4})$, with the notation of Lemma~\ref{lem:coneCont}. This bound is related to the upper bound of \cite{Horan1}.
\end{remark}

We extend the notion of invariant measures corresponding to punctured potentials introduced in \cite{DemersTodd-Multimodal17}, to the random setting. Let $\cP_{T,m}^H(I)$ denote the collection of $T$-invariant probability measures $\eta$ on $\Om\times I$ with marginal $m$ on $\Omega$, 
such that its disintegration $\{\eta_\om\}$ satisfies $\eta_\om(H_\om)=0$ for $m$ a.e. $\om\in\Om$. 

\begin{definition}
	We say that a measure $\eta\in\cP_{T,m}^H(\IO)$ is a \emph{relative equilibrium state} for the
	random map $\{(T_\om, H_\om)\}$ with
	potential $\{\phi_\om\}$ if $$\cEP(\phi)=h_\eta(T)+\int_{\Om \times \IO} \phi \,d\eta,$$
	where $h_\eta(T)$ denotes the entropy of $T$ with respect to $\eta$.
\end{definition}

The proof of the next result follows similarly to the proof of Theorem 2.23 in \cite{AFGTV1} (see also Remark 2.24, Lemma 12.2 and Lemma 12.3).
\begin{theorem}\label{thm:eqstates}
	Assume  $\{\log g_\om=:\phi_\om\}$ is a strongly contracting potential for the random (open or closed) map $\{(T_\om, H_\om)\}$. Then, the random measure $\mu\in\cP_{T,m}^H(\IO)$ with disintegration $\{\mu_\om\}$ produced in  Theorem~\ref{thm:qinvMeas} is the unique relative equilibrium state for $\{\phi_\om\}$. It satisfies the following variational principle:
	\begin{align*}
		\Lm_1= \cEP(\phi)
		= h_\mu(T)+\int_{\Om \times \IO} \phi \,d\mu
		=
		\sup_{\eta\in\cP_{T,m}^H(\IO)} h_\eta(T)+\int_{\Om \times \IO} \phi \,d\eta.
	\end{align*}
\end{theorem}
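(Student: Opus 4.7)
The plan is to follow the scheme of Theorem~2.23 of \cite{AFGTV1}, adapting the auxiliary estimates to the strongly contracting setting. I would proceed in four steps: (i) verify $\mu \in \cP_{T,m}^H(\IO)$; (ii) show $h_\mu(T) + \int \phi\, d\mu = \Lm_1$ via a Jacobian computation; (iii) establish the variational inequality $h_\eta(T) + \int \phi\, d\eta \le \Lm_1$ for every $\eta \in \cP_{T,m}^H(\IO)$; and (iv) deduce uniqueness. Step~(i) is immediate because Lemma~\ref{lem:confmeas} gives $\supp(\nu_\om) \subseteq X_\om$, so $\mu_\om = q_\om\nu_\om/\nu_\om(q_\om)$ charges no part of $H_\om$.

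For (ii), the conformality $\nu_{\sg\om}(\cL_\om f) = \lm_\om^+ \nu_\om(f)$ together with a change of variables on each monotonicity interval $Z \in \mathring{\cZ}_\om^{(1)}$ gives the Jacobian of $T_\om$ with respect to $\nu_\om$ as $\lm_\om^+/g_\om$ on $X_\om$. Combining with the density $q_\om/\nu_\om(q_\om)$ and $\cL_\om q_\om = \lm_\om^- q_{\sg\om}$, one obtains
\[
\log J_\om^\mu(x) = \log \lm_\om^+ - \phi_\om(x) + \bigl[\log q_{\sg\om}(T_\om x) - \log q_\om(x)\bigr] + \bigl[\log \nu_\om(q_\om) - \log \nu_{\sg\om}(q_{\sg\om})\bigr].
\]
Integrating against $\mu$ and exploiting $T$-invariance of $\mu$ together with $\sg$-invariance of $m$ to eliminate the coboundary terms (temperedness of $q_\om$ and $\nu_\om(q_\om)$ established in Sections~\ref{S:equivDens}--\ref{S:confMeas} ensures integrability), the random Rokhlin formula $h_\mu(T) = \int \log J_\om^\mu\, d\mu$ delivers the desired equality.

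Step~(iii) will be the main obstacle, since it requires a random Ruelle--Walters inequality in the open setting with partial branches. I would fix a finite measurable partition $\cA$ of $\IO$ that refines $\mathring{\cZ}_\om^{(1)}$, write $\cA_\om^{(n)} := \vee_{i=0}^{n-1}(T_\om^{(i)})^{-1}\cA$, and apply a fiberwise Jensen inequality based on the identity $\cL_\om^{(n)}\ind_A = e^{S_n\phi \circ T_{\om,A}^{-n}}\ind_{T_\om^{(n)}(A)}$ on atoms $A \in \cA_\om^{(n)}$. Summing over atoms yields
\[
H_{\eta_\om}\bigl(\cA_\om^{(n)}\bigr) + \int S_n\phi\, d\eta \le \int \log \cL_\om^{(n)} 1\, d\eta_{\sg^n\om} + O(1),
\]
where the $O(1)$ term accounts for the contribution of non-full branches, controlled using the bounded variation estimates of Section~\ref{sec:invCones}. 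Dividing by $n$, invoking Kingman's subadditive ergodic theorem, and using the growth rate $\tfrac1n \log \Einf \cL_\om^{(n)}1 \to \Lm_1$ implied by Lemma~\ref{lem:eqdens}, we obtain the required inequality.

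For uniqueness in (iv), suppose $\eta$ is a second relative equilibrium state. Tracing equality back through the Jensen step of~(iii) forces $\eta_\om \ll \nu_\om$, with a density whose iterates under $\cL_\om$ have bounded normalised variation, hence lying eventually in $\cC_{a_\om}$; the uniqueness of equivariant densities established in Lemma~\ref{lem:eqdens}(ii) then forces this density to equal $q_\om/\nu_\om(q_\om)$, so $\eta = \mu$. Beyond the adaptation of \cite{AFGTV1}, the principal technical points are the measurability and integrability of the random $n$-step quantities in the non-transitive open setting, for which the integrability conditions built into Definition~\ref{def:cp} and the temperedness output of Section~\ref{sec:invCones} are the crucial inputs.
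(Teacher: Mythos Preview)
Your approach matches the paper's: the paper does not give a detailed argument but simply records that the proof follows Theorem~2.23 of \cite{AFGTV1} (together with Remark~2.24, Lemma~12.2 and Lemma~12.3 there), and your four-step outline is precisely that scheme.

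Two points in your sketch deserve care. In step~(iii), a single fixed finite partition $\cA$ cannot refine $\mathring{\cZ}_\om^{(1)}$ for every $\om$, since the monotonicity partitions vary over the countable family $\{T_j\}$; you need a random (measurable) partition $\cA_\om$. More substantively, your uniqueness argument in step~(iv) is a genuine gap as written: equality in the Jensen inequality does not by itself force $\eta_\om \ll \nu_\om$, and even if absolute continuity were granted, there is no a priori reason the Radon--Nikodym derivative lies in $\BV$, let alone in the cone $\cC_{a_\om}$. The uniqueness portion in \cite{AFGTV1} is handled through the separate Lemmas~12.2 and~12.3 cited by the paper, which encode the requisite structural argument; you should invoke those rather than attempt to extract uniqueness directly from the variational inequality.
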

\begin{remark}
	The same conclusions hold for the random invariant measures $\{\mu_\om\}$ in the random open setting of \cite{AFGTV2}.
\end{remark}

\begin{proof}[Proof of Theorem~\ref{thm:qinvMeas}]
	To show $\int \log \lm_\om^+dm= \int \log \lm_\om^-dm$, we prove that for \maeom,
	\begin{equation}\label{eq:lamnu}
		\frac{ \nu_\om(q_\om)\lm_\om^+ }{ \nu_{\sg\om}(q_{\sg\om})\lm_\om^-}=1.
	\end{equation}
	Indeed,
	\begin{align*}
		\nu_\om(q_\om) &= \lim_{n\to\infty}\frac{\Einf_{\nothing} (\cL_{\om}^{(n)} q_\om)}{\Einf_{\nothing}(\cL_{\om}^{(n)} 1)}=
		\lim_{n\to\infty}\frac{\lm_\om^- \Einf_{\nothing} (\cL_{\sg\om}^{(n-1)} q_{\sg\om})}{\Einf_{\nothing}(\cL_{\sg\om}^{(n-1)} (\cL_\om 1))}=\frac{\lm_\om^- \nu_{\sg\om}(q_{\sg\om})}{\lm_\om^+}.
	\end{align*}
	Next we show \eqref{eq:invmeas}. In view of Lemma~\ref{lem:eqdens} and Lemma~\ref{lem:confmeas},
	\begin{align*}
		\int f   d\mu_{\sg\om} &= \frac{1}{ \nu_{\sg\om}(q_{\sg\om})}\int f \cdot q_{\sg \om} d\nu_{\sg\om} =  \frac{1}{ \nu_{\sg\om}(q_{\sg\om})\lm_\om^-} \int f \ \cdot \cL_\om (q_\om) d\nu_{\sg\om}\\
		&=  \frac{1}{ \nu_{\sg\om}(q_{\sg\om})\lm_\om^-} \int  \cL_\om (f\circ T_\om \cdot q_\om) d\nu_{\sg\om}
		= \frac{\lm_\om^+}{ \nu_{\sg\om}(q_{\sg\om})\lm_\om^-} \int f\circ T_\om \cdot q_\om d\nu_\om \\
		&= \frac{ \nu_\om(q_\om)\lm_\om^+ }{ \nu_{\sg\om}(q_{\sg\om})\lm_\om^-}  \int f\circ T_\om d\mu_\om.
	\end{align*}
	Then, \eqref{eq:invmeas} follows from \eqref{eq:lamnu}.
	
	For the second part of the theorem, notice that for every $h\in BV$, $(h+c_{h}) q_{\sg^{-n}\om}\in \cC_{\sqrt{\gm} a_{\sg^{-n}\om}}$ for
	$
	c_{h} = \frac{1 + 2\sqrt{\gm}}{\sqrt{\gm}-\gm}\|h\|_{BV}.
	$\footnote{We do not claim this choice of $c_h$ is optimal.}
	This follows from basic properties of variation, and the facts that $a_{\sg^{-n}\om}\geq1, q_{\sg^{-n}\om} \in \cC_{\gm a_{\sg^{-n}\om}}$.
	Furthermore, the invariance property \eqref{eq:invmeas} implies that the left hand side of \eqref{eq:docPast}
	is unchanged if $h$ is replaced by $h+c$ for any $c\in \RR$.
	In the case $c=c_{h}$, the corresponding right hand side changes in that $\|h\|_{BV}$ must be replaced by
	$\|h\|_{BV}+ c_{h} 
	\leq \big(1+\frac{1 + 2\sqrt{\gm}}{\sqrt{\gm}-\gm}\big)\|h\|_{BV}$.
	Thus, to show \eqref{eq:docPast} we will assume, without loss of generality\footnote{However, we should keep this assumption in mind  at the end of the proof, where apparently only $\|h\|_\infty$ is relevant, and not $\|h\|_{\BV}$.}, that  $h q_{\sg^{-n}\om}\in \cC_{\sqrt{\gm} a_{\sg^{-n}\om}}$.
	
	Using Lemma~\ref{lem:confmeas} repeatedly, and \eqref{eq:lamnu} in the last step yields
	\begin{equation}\label{eq:doc1}
		\begin{split}
			\mu_{\sg^{-n}\om}&(f\circ  T^{(n)}_{\sg^{-n}\om} \cdot h) = \frac{1}{\nu_{\sg^{-n}\om}(q_{\sg^{-n}\om})} \int f\circ  T^{(n)}_{\sg^{-n}\om} \cdot h q_{\sg^{-n}\om} d\nu_{\sg^{-n}\om}\\
			&= \frac{1}{\nu_{\sg^{-n}\om}(q_{\sg^{-n}\om}) \lm^{+,(n)}_{\sg^{-n}\om}} \int \cL_{\sg^{-n}\om}^{(n)} (f\circ  T^{(n)}_{\sg^{-n}\om} \cdot h q_{\sg^{-n}\om} )d\nu_{\om}\\
			&= \frac{1}{\nu_{\sg^{-n}\om}(q_{\sg^{-n}\om}) \lm^{+,(n)}_{\sg^{-n}\om}} \int f \cdot \cL_{\sg^{-n}\om}^{(n)} ( h q_{\sg^{-n}\om} )d\nu_{\om}\\
			&= \frac{1}{\lm^{-,(n)}_{\sg^{-n}\om}\nu_{\om}(q_{\om})} \int f \cdot \cL_{\sg^{-n}\om}^{(n)} ( h q_{\sg^{-n}\om} )d\nu_{\om}.
		\end{split}
	\end{equation}
	On the other hand,
	\begin{equation}\label{eq:doc2}
		\begin{split}
			\mu_{\sg^{-n}\om}(h)&=\frac{\nu_{\sg^{-n}\om}(h q_{\sg^{-n}\om})}{\nu_{\sg^{-n}\om}(q_{\sg^{-n}\om})}=
			\lim_{k\to\infty}\frac{\Einf_{\nothing}(\cL_{\sg^{-n}\om}^{(n+k)}( h q_{\sg^{-n}\om}))}{\Einf_{\nothing}(\cL_{\sg^{-n}\om}^{(n+k)}(q_{\sg^{-n}\om}))}\\
			&=\frac{\nu_{\om}(\cL_{\sg^{-n}\om}^{(n)}(h q_{\sg^{-n}\om}))}{\nu_{\om}(\cL_{\sg^{-n}\om}^{(n)}(q_{\sg^{-n}\om}))}
			=\frac{\nu_{\om}(\cL_{\sg^{-n}\om}^{(n)}(h q_{\sg^{-n}\om}))}{\lm^{-,(n)}_{\sg^{-n}\om}\nu_{\om}(q_{\om})}.
		\end{split}
	\end{equation}
	Combining  \eqref{eq:doc1} and \eqref{eq:doc2}, we get
	\begin{equation}\label{eq:doc3}
		\begin{split}
			|\mu&_{\sg^{-n}\om}(f\circ  T^{(n)}_{\sg^{-n}\om} \cdot h)  - \mu_{\om}(f)\mu_{\sg^{-n}\om}(h)| \\
			&= 
			\frac{\lt|\nu_\om \big(f \cdot \cL_{\sg^{-n}\om}^{(n)} ( h q_{\sg^{-n}\om}) - \mu_\om(f)\cL_{\sg^{-n}\om}^{(n)}(h q_{\sg^{-n}\om}) \big)\rt|}{\lm^{-,(n)}_{\sg^{-n}\om}\nu_{\om}(q_{\om})}
			= \frac{\lt|\nu_\om \big(\cL_{\sg^{-n}\om}^{(n)} ( h q_{\sg^{-n}\om})(f -\mu_\om(f))\big)\rt|}{\lm^{-,(n)}_{\sg^{-n}\om}\nu_{\om}(q_{\om})}\\
			&\leq \frac{ \Einf(\cL_{\sg^{-n}\om}^{(n)}(h q_{\sg^{-n}\om})) \lt|\nu_\om \big(q_\om (f -\mu_\om(f))\big)\rt|}{\lm^{-,(n)}_{\sg^{-n}\om}\nu_{\om}(q_{\om})}\\
			&\quad +
			\frac{3 \|q_\om\|_\infty \big \| {\nothing}\cL_{\sg^{-n}\om}^{(n)}(h q_{\sg^{-n}\om})- \Einf(\cL_{\sg^{-n}\om}^{(n)}(h q_{\sg^{-n}\om})) q_{\om} \big\|_\infty \|f\|_{L^1(\nu_{\om})} } {\lm^{-,(n)}_{\sg^{-n}\om}\nu_{\om}(q_{\om})},
		\end{split}
	\end{equation}
	where we have used that 
	$|\nu_\om (f -\mu_\om(f))|\leq 3 \|q_\om\|_\infty \|f\|_{L^1(\nu_{\om})}$ in the last line.
	Since $\nu_\om \big(q_\om (f -\mu_\om(f))\big)=0$, it only remains to bound the last term.
	Lemmas~\ref{lem:finiteDiam} and ~\ref{lem:coneCont}, as well as the fact  that
	$q_{\sg^{-n}\om}, h q_{\sg^{-n}\om}\in \cC_{\sqrt{\gm} a_{\sg^{-n}\om}}$,
	show that for sufficiently large $n\in \NN$,
	\begin{align*}
		\Ta_{\om}(\cL_{\sg^{-n}\om}^{(n)}( h q_{\sg^{-n}\om}),\cL_{\sg^{-n}\om}^{(n)}(q_{\sg^{-n}\om}) ) \leq  \Ta_{\sg^{-n}\om}(h q_{\sg^{-n}\om},  q_{\sg^{-n}\om}) \vartheta^n
		\leq 2\log\left( \frac{1+\sqrt{\gm} (a_{\sg^{-n}\om}+1)}{1-\sqrt{\gm}} \right)  \vartheta^n,
	\end{align*}
	where \eqref{eq:findiam} has been used in the final step. Combining with \eqref{eq:compDist} 
	yields
	\begin{align*}
		\Big\| 
		&{\nothing}\cL_{\sg^{-n}\om}^{(n)}(h q_{\sg^{-n}\om})- \Einf(\cL_{\sg^{-n}\om}^{(n)}(h q_{\sg^{-n}\om})) q_{\om} \Big\|_\infty \\
		&\leq (e^{\Ta_{\sg^{-n}\om}(h q_{\sg^{-n}\om},  q_{\sg^{-n}\om}) \vartheta^n}-1 \big)
		\Einf(\cL_{\sg^{-n}\om}^{(n)}(h q_{\sg^{-n}\om}))
		\|q_{\om}\|_{\infty}.
	\end{align*}
	Hence, using the elementary estimate $|e^x-1|\leq 3x$ for $0\leq x \leq 1$, \eqref{eq:doc3} implies that for sufficiently large $n$,
	\begin{equation}\label{eq:doc4}
		\begin{split}
			|\mu_{\sg^{-n}\om}&(f\circ  T^{(n)}_{\sg^{-n}\om} \cdot h)  - \mu_{\om}(f)\mu_{\sg^{-n}\om}(h)| \\
			&\leq 
			\frac{9\log\big( \frac{1+2\sqrt{\gm} a_{\sg^{-n}\om}}{1-\sqrt{\gm}} \big)  \vartheta^n   \Einf(\cL_{\sg^{-n}\om}^{(n)}(h q_{\sg^{-n}\om}))  \|q_{\om}\|^2_{\infty}
				\|f\|_{L^1(\nu_{\om})} } {\lm^{-,(n)}_{\sg^{-n}\om}\nu_{\om}(q_{\om})}\\
			&\leq 9\log\lt( \frac{1+2\sqrt{\gm} a_{\sg^{-n}\om}}{1-\sqrt{\gm}} \rt)   \frac{\|q_{\om}\|^2_{\infty}} {\nu_{\om}(q_{\om})}
			\|f\|_{L^1(\nu_{\om})}  \|h\|_{\infty}  \vartheta^n=: C'_{\sg^{-n}\om} 
			\frac{\|q_{\om}\|^2_{\infty}} {\nu_{\om}(q_{\om})} \|f\|_{L^1(\nu_{\om})}  \|h\|_{\infty}  \vartheta^n,
		\end{split}
	\end{equation}
	where in the last inequality we have used the fact that 
	\[ \Einf(\cL_{\sg^{-n}\om}^{(n)}(h q_{\sg^{-n}\om}))\leq 
	\|h\|_{\infty} \Einf \cL_{\sg^{-n}\om}^{(n)}(q_{\sg^{-n}\om})=\|h\|_{\infty} \lm^{-,(n)}_{\sg^{-n}\om}.\]
	Since $a_\om$ is tempered, $C'_\om$ is tempered and since $\nu_\om(q_\om)\geq 1$ and by Lemma~\ref{lem:eqdens},
	$\|q_\om\|_\infty \leq 1+ \gm a_\om$, \eqref{eq:docPast} holds for any $r>\vartheta$, with $\vartheta$ as in Lemma~\ref{lem:coneCont}, and some tempered $C_\om$.
	
	The proof of \eqref{eq:docFuture} follows from replacing $\om$ with $\sg^n\om$ in \eqref{eq:doc4}, and using temperedness.
\end{proof}

\subsection{Multiplicative ergodic theory and random Ruelle--Perron--Frobenius decomposition}\label{S:met}
Under mild extra assumptions, the
multiplicative ergodic theorem of \cite{FLQ2} applies to cocycles of random maps with strongly contracting potentials, providing uniqueness of the measures $\mu_\om$ from Theorem~\ref{thm:qinvMeas} and further information.
\begin{theorem}\label{thm:randompf} 
	Assume  $\{\log g_\om\}$ is a strongly contracting potential for the random (open or closed) map $\{(T_\om, H_\om)\}$.
	In addition, suppose $\Om$ is a Borel subset of a separable complete metric space, $m$ is a Borel probability measure and $\sg$ is a homeomorphism.
	Then, there is a unique, measurable random Ruelle--Perron--Frobenius type decomposition for the cocycle generated by $\{\cL_\om\}$. That is,
	for \maeom,  there exists a unique (measurable) tuple
	$(\psi_\om, \nu_\om, \lm_\om)$ with
	$\psi_{\om}\in\BV$, $\nu_{\om}\in\BV^*$, the dual space of $\BV$, and $\lm_{\om}\in\CC\bs\set{0}$ 
	such that
	\begin{equation}\label{eq:rpf1}
		\nu_\om(1)=1, \quad \cL_{\om}(\psi_{\om})=\lm_{\om}\psi_{\sg\om},
		\quad\text{ and }\quad
		\nu_{\sg\om}(\cL_{\om}(f))=\lm_{\om}\nu_{\om}(f),
	\end{equation}
	for all $f\in\BV$, which  also satisfies the following: Let
	$Q_{\om}:\BV\to\BV$ be defined by 	$\lm_{\om}^{-1}\cL_{\om}(f)=\nu_{\om}(f) \psi_{\sg\om}+Q_{\om}(f)$. Then,
	\begin{equation}\label{eq:rpf2}
		Q_{\om}(\psi_{\om})=0, \quad \lim_{n\to\infty} \frac{1}{n}\log \|Q_\om^{(n)}\|_{\BV}<0 \quad  \text{ and } \quad
		\nu_{\sg\om}(Q_{\om}(f))=0,
	\end{equation}
	for all $f\in\BV$, where $Q_\om^{(n)}:=Q_{\sg^{n-1}\om} \circ \dots \circ Q_{\sg\om}\circ Q_\om$.
	
	Furthermore, 			
	\begin{equation}\label{eq:lexp}
		\Lm_1=\int \log\lm_\om dm =
		\lim_{n\to\infty} \frac{1}{n}\log \Einf(\cL_{\om}^{(n)}1)
		= \lim_{n\to \infty}\frac1n \log\|\cL_\om^{(n)}\|_{\BV}, \text{ for \maeom}.
	\end{equation}

\end{theorem}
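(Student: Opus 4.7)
The plan is to apply the Froyland-Lloyd-Quas multiplicative ergodic theorem \cite{FLQ2} to the cocycle generated by $\{\cL_\om\}$ on $\BV$, and then to identify the top Oseledets data with the objects constructed in \S\ref{S:equivDens}--\S\ref{S:confMeas}. The hypotheses of \cite{FLQ2} to verify are: strong measurability of $\om \mapsto \cL_\om$, which is immediate from the countable partition structure of $\Om$ together with the standing assumption that $\sg$ is a Borel-measurable homeomorphism; the integrability $\int \log^+ \|\cL_\om\|_{\BV}\,dm < \infty$, which follows from Definition~\ref{def:cp} combined with the Lasota-Yorke estimate \eqref{eq:LY+}; and the gap condition asserting that the index of compactness of the cocycle is strictly smaller than the maximal Lyapunov exponent $\Lm_1$.

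The gap condition is where Lemma~\ref{lem:coneCont} plays its role. Repeating the argument from the proof of Theorem~\ref{thm:qinvMeas}, for every $h \in \BV$ one can choose $c_h$ so that $(h+c_h)q_\om \in \cC_{\sqrt{\gm} a_\om}$. Then Lemma~\ref{lem:finiteDiam} together with Lemma~\ref{lem:coneCont} and \eqref{eq:compDist} imply that $\bigl\|(\lm_\om^{+,(n)})^{-1} \cL_\om^{(n)}((h+c_h)q_\om) - \nu_\om((h+c_h)q_\om)\,q_{\sg^n\om}\bigr\|_\infty$ decays at rate $\vartheta^n$ times a tempered prefactor. A standard Lasota-Yorke bootstrap based on \eqref{eq:LY+} then upgrades this to $\BV$-norm decay at any rate $r > \vartheta$, which translates into the Kuratowski index of compactness of the cocycle being at most $\log r < \Lm_1$.

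With the MET applied, one obtains a measurable Oseledets splitting $\BV = Y_\om \oplus V_\om$. By the preceding gap analysis, $Y_\om$ is one-dimensional, and \cite{FLQ2} supplies a unique normalized equivariant functional on $Y_\om$ together with a unique top multiplier $\lm_\om$. The candidates for these top Oseledets data are already available: setting $\psi_\om := q_\om / \nu_\om(q_\om)$ and using \eqref{eq:lamnu} yields $\cL_\om \psi_\om = \lm_\om^+ \psi_{\sg\om}$, while $\nu_\om$ from Lemma~\ref{lem:confmeas} satisfies $\nu_\om(1)=1$ and $\nu_{\sg\om}(\cL_\om f) = \lm_\om^+ \nu_\om(f)$, so I would take $\lm_\om := \lm_\om^+$. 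Uniqueness of the tuple $(\psi_\om, \nu_\om, \lm_\om)$ follows from one-dimensionality of the top Oseledets direction together with the normalization $\nu_\om(1)=1$. The operator $Q_\om$ defined by $\lm_\om^{-1}\cL_\om(f) = \nu_\om(f)\psi_{\sg\om} + Q_\om(f)$ is the complementary Oseledets projection composed with $\lm_\om^{-1}\cL_\om$; the identity $Q_\om(\psi_\om)=0$ follows from $\nu_\om(\psi_\om)=1$ and the equivariance of $\psi$; $\nu_{\sg\om}(Q_\om(f))=0$ follows from the equivariance of $\nu$; and the exponential decay of $\|Q_\om^{(n)}\|_{\BV}$ is the MET restricted to $V_\om$.

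For \eqref{eq:lexp}, the identity $\Lm_1 = \int \log\lm_\om\,dm$ is the definition of the top Lyapunov exponent produced by \cite{FLQ2}. The second equality follows by applying the shifted form of Lemma~\ref{lem:eqdens} to obtain $\cL_\om^{(n)} 1/\Einf(\cL_\om^{(n)}1) \to q_{\sg^n\om}$ exponentially fast in $\|\cdot\|_\infty$, then dividing out and noting that $\Einf(\cL_\om^{(n+1)}1)/\Einf(\cL_\om^{(n)}1) \to \lm^+_{\sg^n\om}$; Birkhoff's ergodic theorem supplies the logarithmic rate. The third equality follows from the sandwich $\Einf(\cL_\om^{(n)}1) \leq \|\cL_\om^{(n)}1\|_{\BV} \leq \|\cL_\om^{(n)}\|_{\BV}$, whose upper asymptotic bound is controlled by the MET and matches $\Lm_1$. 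The main technical obstacle is establishing the quasi-compactness condition: converting the projective cone contraction, which is only uniform with respect to the tempered (non-uniform) cone parameters $a_\om$ and $\Dl_\om$, into a genuine $\BV$-operator-norm contraction on a suitable complement at a rate controllable uniformly on average.
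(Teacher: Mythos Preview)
Your overall strategy---apply \cite{FLQ2}, then identify the top Oseledets data with $(\psi_\om,\nu_\om,\lm_\om)=(q_\om/\nu_\om(q_\om),\nu_\om,\lm_\om^+)$---is the same as the paper's, but the order of operations and one key step differ. The paper does \emph{not} first verify an abstract quasi-compactness hypothesis for \cite{FLQ2} and then read off the decomposition; instead it defines $Q_\om$ explicitly from the already-constructed objects, proves $\lim_{n\to\infty}\tfrac{1}{n}\log\|Q_\om^{(n)}\|_{\BV}<0$ directly via cone contraction, and only then invokes \cite{FLQ2} for uniqueness and for the identification $\Lm_1=\lim_n\tfrac{1}{n}\log\|\cL_\om^{(n)}\|_{\BV}$. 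Functionally the two routes require the same work, but the paper's organization avoids having to name or estimate the index of compactness separately.

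Two concrete issues in your write-up. First, the sentence ``index of compactness \dots\ at most $\log r<\Lm_1$'' is misnormalised: the cone contraction bounds the \emph{normalized} remainder $(\lm_\om^{(n)})^{-1}\cL_\om^{(n)}f-\nu_\om(f)\psi_{\sg^n\om}$, so what you actually obtain is that the second Lyapunov exponent (or the index of compactness) is at most $\Lm_1+\log r$, and the spectral gap follows because $\log r<0$, not because $\log r<\Lm_1$. Second, the ``standard Lasota--Yorke bootstrap based on \eqref{eq:LY+}'' to pass from $\|\cdot\|_\infty$ to $\|\cdot\|_{\BV}$ is not what the paper does and is not obviously available here: \eqref{eq:LY+} is stated only for nonnegative $f$, and its constants are random and merely tempered, so the usual two-norm bootstrap does not go through cleanly. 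The paper instead observes that for $f\in\cC_{\sqrt{\gm}}$ the ratio $\cL_{\sg^{-n}\om}^{(n)}f/\Einf(\cL_{\sg^{-n}\om}^{(n)}f)$ converges in $\BV$ (since its limit $q_\om^f$ lies in $\BV$ with controlled variation), which forces the $\BV$ and $L^\infty$ growth rates of $\cL_{\sg^{-n}\om}^{(n)}f$ to coincide; combined with \eqref{eq:Qn} and the $L^\infty$ estimate, this yields the $\BV$ decay of $Q_{\sg^{-n}\om}^{(n)}f$, and Kingman then transfers the backward rate to the forward one. You should replace the bootstrap step with this argument.
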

\begin{proof}[Proof of Theorem~\ref{thm:randompf}]
	
	Let $\om \in \Om$.
	Connecting with the notation of \S\ref{S:constructions}, let
	$\lm_\om=\lm_\om^+$ and $\psi_\om=q_\om/\nu_\om(q_\om)$.  Then,
	the only condition in \eqref{eq:rpf1} and \eqref{eq:rpf2} that is not straightforward to derive from Lemma~\ref{lem:eqdens} and Lemma~\ref{lem:confmeas}  is 
	$\lim_{n\to\infty} \frac{1}{n}\log \|Q_\om^{(n)}\|_{\BV}<0$.
To show this,
we first observe, by induction, that 
\begin{equation}\label{eq:Qn}
	Q_\om^{(n)}(f)=(\lm_\om^{(n)})^{-1} \cL_\om^{(n)}  (f-\nu_\om(f)\psi_\om)=(\lm_\om^{(n)})^{-1} \cL_\om^{(n)}  (f)-\nu_\om(f) \psi_{\sg^n\om}.
\end{equation}
Next, using the notation of Lemma~\ref{lem:contractingCones} and Theorem~\ref{thm:qinvMeas}, assume  $f\in \cC_{\sqrt{\gm}}$, and let  $h_n=\frac{\nu_{\sg^{-n}\om}(q_{\sg^{-n}\om}) f}{q_{\sg^{-n}\om}}$.
Then, recalling that $\Einf(q_{\sg^{-n}\om})=1$, we get that
$\|h_n\|_\infty \leq \|q_{\sg^{-n}\om} \|_\infty \|f\|_{\infty}$.
Also, 
$h_n q_{\sg^{-n}\om}=\nu_{\sg^{-n}\om}(q_{\sg^{-n}\om})f \in \cC_{\sqrt{\gm}}\subset \cC_{\sqrt{\gm}a_{\sg^{-n}\om}}$.
Recalling \eqref{eq:lamnu}, and writing the RHS of \eqref{eq:doc3} with the choice $(h,f)=(h_n,1)$,  yields, as in \eqref{eq:doc4},
\begin{equation}\label{eq:docMET}
	\begin{split}
		&\frac{\big \| \nothing\cL_{\sg^{-n}\om}^{(n)}(h_n q_{\sg^{-n}\om})- \Einf(\cL_{\sg^{-n}\om}^{(n)}(h_n q_{\sg^{-n}\om})) q_{\om} \big\|_\infty } {\lm^{-,(n)}_{\sg^{-n}\om}\nu_{\om}(q_{\om})}\\
		&=\lt(\lm^{(n)}_{\sg^{-n}\om} \rt)^{-1}  \big\| \nothing\cL_{\sg^{-n}\om}^{(n)}(f)- \Einf(\cL_{\sg^{-n}\om}^{(n)}(f)) q_{\om} \big\|_\infty \leq C_\om  \|h_n\|_{\infty}  \vartheta^n
		\leq C_\om  \|q_{\sg^{-n}\om} \|_\infty \vartheta^n  \|f\|_{\infty}  .
	\end{split}
\end{equation}

Observe that
$$(\lm^{(n)}_{\sg^{-n}\om} )^{-1} \Einf(\cL_{\sg^{-n}\om}^{(n)}(f)) q_{\om} -\nu_{\sg^{-n}\om}(f) \psi_{\om} = \frac{ \Einf(\cL_{\sg^{-n}\om}^{(n)}(f))}{\lm^{(n)}_{\sg^{-n}\om}} 
\nu_\om\lt(q_{\om} -\frac{\cL_{\sg^{-n}\om}^{(n)}(f)}{ \Einf(\cL_{\sg^{-n}\om}^{(n)}(f))}\rt)
\psi_{\om}.$$
Note also that for any $r>\vartheta$ there exists $D_\om>0$ such that
$\lt\|q_{\om} -\frac{\cL_{\sg^{-n}\om}^{(n)}(f)}{ \Einf(\cL_{\sg^{-n}\om}^{(n)}(f))} \rt\|_\infty\leq D_\om r^n$, by \eqref{eq:Cauchyfn}.
Recalling that 
$\lm^{(n)}_{\sg^{-n}\om}=\nu_\om(\cL_{\sg^{-n}\om}^{(n)}(1))\geq \Einf (\cL_{\sg^{-n}\om}^{(n)}(1))$, we get
\begin{equation}\label{eq:docMET2}
	\|(\lm^{(n)}_{\sg^{-n}\om} )^{-1}  \Einf(\cL_{\sg^{-n}\om}^{(n)}(f)) q_{\om} -\nu_{\sg^{-n}\om}(f) \psi_{\om}\|_\infty \leq D_\om \| \psi_{\om}\|_\infty r^n  \|f\|_\infty.
\end{equation}
The triangle inequality applied to \eqref{eq:docMET} and \eqref{eq:docMET2}, combined with \eqref{eq:Qn}, shows that
$\lim_{n\to\infty} \frac{1}{n}\log \|\nothing Q_{\sg^{-n}\om}^{(n)}f\|_\infty<0$.

Since the limit in Lemma~\ref{lem:eqdens}\ref{it:eqdens1} satisfies $q_\om^f\in \BV$, then 
$\lim_{n\to\infty} \frac{1}{n}\log \|\cL_{\sg^{-n}\om}^{(n)}f\|_{\BV}=\lim_{n\to\infty} \frac{1}{n}\log \Einf(\cL_{\sg^{-n}\om}^{(n)}f)=\lim_{n\to\infty} \frac{1}{n}\log \|\cL_{\sg^{-n}\om}^{(n)}f\|_{\infty}$. Thus, \eqref{eq:Qn} and the previous paragraph yield, for every $f\in \cC_{\sqrt{\gm}}$,
\begin{equation}\label{eq:growthBV}
	\lim_{n\to\infty} \frac{1}{n}\log \|Q_{\sg^{-n}\om}^{(n)}f\|_{\BV}=\lim_{n\to\infty} \frac{1}{n}\log \|Q_{\sg^{-n}\om}^{(n)}f\|_\infty<0.
\end{equation} 
Since every $f\in \BV$ may be written as $f=f_1-f_2$ such that $f_i\in \cC_{\sqrt{\gm}}$, and the growth rate of a sum is bounded above by the largest of the terms' growth rates, then $\lim_{n\to\infty} \frac{1}{n}\log \|  Q_{\sg^{-n}\om}^{(n)} f\|_{\infty}<0$ holds for every $f\in \BV$.
Thus, $\lim_{n\to\infty} \frac{1}{n}\log \|Q_{\sg^{-n}\om}^{(n)}\|_{\BV}<0$.

Finally, Kingman's sub-additive ergodic theorem implies that $\lim_{n\to\infty} \frac{1}{n}\log \|Q_{\om}^{(n)}\|_{\BV}=\lim_{n\to\infty} \frac{1}{n}\log \|\nothing Q_{\sg^{-n}\om}^{(n)}\|_{\BV}$, so $\lim_{n\to\infty} \frac{1}{n}\log \|Q_{\om}^{(n)}\|_{\BV}<0$, as claimed.
In fact, our arguments show that $\lim_{n\to\infty} \frac{1}{n}\log \|Q_{\om}^{(n)}\|_{\BV}\leq \log \vartheta$, for any $\vartheta>\tanh(\tfrac{D}{4})$, as in Lemma~\ref{lem:coneCont}. 

The multiplicative ergodic theorem \cite{FLQ2} ensures 
uniqueness of a (measurable) equivariant splitting, which in the present context translates into
uniqueness of the tuple $(\psi_\om, \nu_\om, \lm_\om)$. 
Furthermore, the theorem shows that  
$
\Lm_1=\int \log\lm_\om dm = \lim_{n\to \infty}\frac1n \log\|\cL_\om^{(n)}\|_{\BV},
$
for \maeom.
\end{proof}

\section{Examples} \label{S:examples1d}
\subsection{Sufficient conditions for strongly contracting potentials}
In this section, we present conditions to ensure a random potential is strongly contracting.
Assume 
\[\log \#\mathring{\cZ}_{\om}, \log \|g_\om\|_\infty, \log   \Einf(g_{\om}),  \frac{\var(g_{\om})}{\|g_{\om}\|_\infty} \in L^1(m).\]
Since 
$1/\Einf(g_{\om,n})$ and $1/b_{\om,f}^{(n)}$ are sub-multiplicative, Kingman's subadditive ergodic theorem implies that the following limits exist and are $m$-a.e. constant,
\begin{align*}
-\phi^-&:=\lim  \frac{1}{n}\log (1/\Einf_{X_{\om,n}}(g_{\om}^{(n)})),\quad
\bt_f:=\lim  \frac{1}{n}\log b_{\om,f}^{(n)}.
\end{align*}
In addition, they coincide with the limits of the, respectively, decreasing and increasing sequences 
\begin{align*}
\Big (-\phi^{-}_n&:= \int -\frac{1}{n}\log \Einf_{X_{\om,n}}(g_{\om}^{(n)}) dm \Big)_{n\in\NN}, \quad
\Big(\bt_{f,n}:=\int  \frac{1}{n}\log b_{\om,f}^{(n)} dm \Big)_{n\in\NN}.
\end{align*}
Furthermore, $\|g_\om\|_\infty^{(n)}$ is multiplicative, so by Birkhoff's ergodic theorem, the limit 
$\phi^+:=\lim   \frac{1}{n}\log \|g_\om\|_\infty^{(n)}$ 
exists, and is $m$-a.e. equal to  $\int \log \|g_\om\|_\infty dm$.
Recalling that $\tilde{S}_{n,\om}(g)=\sum_{j=0}^{n-1} \frac{\var(g_{\sg^j\om})}{\|g_{\sg^j\om}\|_\infty}$, Birkhoff's ergodic theorem implies $\lim   \frac{1}{n}\log (1+ \tilde{S}_{n,\om}(g))=0$.

The following bound on $\xi_\om^{(n)}$ may be considered a random generalization of \cite[Lemma 6.3]{liverani_maume_2003}; see \cite[Proposition 15.3]{AFGTV2} for a proof.

\begin{proposition}[\cite{AFGTV2}]\label{prop:nonfull}
The following inequality holds for $\xi_{\om}^{(n)}$, the largest number of contiguous
non-full intervals for $T_\om^{(n)}$:
\[
\xi_{\om}^{(n)}
\leq n \prod_{j=0}^{n-1} (\xi^{(1)}_{\sg^j\om}+2).
\]
\end{proposition}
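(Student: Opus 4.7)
My plan is to prove the bound by induction on $n$, reducing to a one-step recurrence. Specifically, I will establish
\[
\xi_\om^{(n+1)} \leq (\xi_\om^{(n)} + 2)\,(\xi_{\sg^n\om}^{(1)} + 2),
\]
from which the stated inequality follows by a straightforward induction: assuming $\xi_\om^{(n)} \leq n \prod_{j=0}^{n-1}(\xi_{\sg^j\om}^{(1)} + 2)$ and using $\prod_{j=0}^{n-1}(\xi_{\sg^j\om}^{(1)} + 2) \geq 2^n \geq 2$, the product $(\xi_\om^{(n)} + 2)(\xi_{\sg^n\om}^{(1)} + 2)$ is easily seen to be at most $(n+1) \prod_{j=0}^{n}(\xi_{\sg^j\om}^{(1)} + 2)$. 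The base case $n=1$ is trivial.

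To establish the recurrence, I fix a maximal collection $B$ of contiguous non-full intervals in $\mathring{\cZ}_{\om,p}^{(n+1)}$, and let $Z_1, \dots, Z_k$ be the elements of $\mathring{\cZ}_\om^{(n)}$ (listed in order along $\IO$) that meet $B$. Maximality of $B$ and the fact that $\mathring{\cZ}_\om^{(n+1)}$ refines $\mathring{\cZ}_\om^{(n)}$ force $Z_1, \ldots, Z_k$ to be consecutive in $\mathring{\cZ}_\om^{(n)}$. The first step is to show $k \leq \xi_\om^{(n)} + 2$, by arguing that every interior $Z_i$ (for $2 \leq i \leq k-1$) lies in $\mathring{\cZ}_{\om,p}^{(n)}$: if some interior $Z_i$ were a full branch, then $T_\om^{(n)}(Z_i) = \IO$, and the standing assumption that $T_{\sg^n\om}$ admits a full branch inside $X_{\sg^n\om}$ would produce a subdivision of $Z_i$ belonging to $\mathring{\cZ}_{\om,f}^{(n+1)}$. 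But $B$ is contiguous and meets both $Z_{i-1}$ and $Z_{i+1}$, so it would necessarily contain every subdivision of $Z_i$, including this full one, contradicting $B \subset \mathring{\cZ}_{\om,p}^{(n+1)}$. Hence $Z_2, \dots, Z_{k-1}$ is a contiguous collection of non-full elements for $T_\om^{(n)}$, giving $k - 2 \leq \xi_\om^{(n)}$.

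The second step is to bound $|B \cap Z_i| \leq \xi_{\sg^n\om}^{(1)} + 2$ for each $i$. The subdivisions of $Z_i$ in $\mathring{\cZ}_\om^{(n+1)}$ correspond to the elements $W \in \mathring{\cZ}_{\sg^n\om}^{(1)}$ with $W \cap T_\om^{(n)}(Z_i) \neq \emptyset$; such a $W$'s subdivision is a full branch of $T_\om^{(n+1)}$ precisely when $W \subset T_\om^{(n)}(Z_i)$ \emph{and} $W \in \mathring{\cZ}_{\sg^n\om,f}^{(1)}$. Listed in order as $W_1, \dots, W_{l}$, these form a consecutive sub-collection of $\mathring{\cZ}_{\sg^n\om}^{(1)}$ because $T_\om^{(n)}(Z_i)$ is an interval. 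At most the two endpoint elements $W_1, W_l$ may fail to be fully contained in $T_\om^{(n)}(Z_i)$; the remaining $W_2, \dots, W_{l-1}$ contribute a non-full subdivision iff they are non-full branches of $T_{\sg^n\om}$. Since $B \cap Z_i$ is a contiguous run of non-full subdivisions, its length is bounded by the longest contiguous non-full run within $W_1, \dots, W_l$, which is at most $\xi_{\sg^n\om}^{(1)}$ (from $W_2, \dots, W_{l-1}$) plus at most $2$ boundary corrections from $W_1$ and $W_l$. Combining with step one yields $|B| \leq k(\xi_{\sg^n\om}^{(1)} + 2) \leq (\xi_\om^{(n)} + 2)(\xi_{\sg^n\om}^{(1)} + 2)$, proving the recurrence.

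The main obstacle is the careful combinatorial bookkeeping for non-full $Z_i$ in step two, where the additive $+2$ in the bound $\xi_{\sg^n\om}^{(1)} + 2$ accounts for the two boundary elements of $\mathring{\cZ}_{\sg^n\om}^{(1)}$ that straddle the endpoints of $T_\om^{(n)}(Z_i)$ and can be non-full purely for geometric (partial-intersection) reasons rather than because of the $T_{\sg^n\om}$-partition structure. Once this bookkeeping is in place, both the recurrence and the closing induction are routine.
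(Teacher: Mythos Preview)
Your argument is correct. The paper does not supply its own proof of this proposition; it cites \cite[Proposition 15.3]{AFGTV2} and notes that the result is a random generalisation of \cite[Lemma 6.3]{liverani_maume_2003}. Your inductive approach via the recurrence $\xi_\om^{(n+1)} \leq (\xi_\om^{(n)}+2)(\xi_{\sg^n\om}^{(1)}+2)$ is precisely the natural strategy in this setting, and both steps of the combinatorial analysis (bounding the number $k$ of level-$n$ intervals meeting a maximal contiguous run $B$, and bounding $|B\cap Z_i|$ within each such interval) are carried out correctly. The one place where a reader might pause is the claim that $B\cap Z_i$ is a contiguous run of subdivisions: for interior $Z_i$ this holds because $Z_i$ is entirely contained in the convex hull of $B$ and hence \emph{all} subdivisions of $Z_i$ lie in $B$, while for the two endpoint $Z_i$'s maximality of $B$ forces $B\cap Z_i$ to be a terminal segment of subdivisions. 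You have the right idea throughout, but making this distinction explicit would tighten the exposition.
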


Synthetizing the previous discussion, we get the following.

\begin{example}\label{ex:contPots}
Assume 
$\log \#\mathring{\cZ}_{\om}, \log \|g_\om\|_\infty, \log   \Einf(g_{\om}),  \frac{\var(g_{\om})}{\|g_{\om}\|_\infty} \in L^1(m).$ Then,
$\{\log g_\om\}$ is a random  strongly contracting potential for the random (open or closed) map $\{(T_\om, H_\om)\}$ if any of the following conditions hold:
\begin{enumerate}
	\item  
	\label{it:nstar=1} Case $n_*=1$: 
	$$\int \log \|g_\om\|_\infty -\log \Einf(g_\om)
	+\log (3) +\log \lt(1+ \frac{\var(g_{\om})}{\|g_{\om}\|_\infty}\rt) +\log (1+2\xi^{(1)}_{\om})-\log b_{\om,f} dm <0.$$
	
	\item \label{it:genEx}  Either $\int \log \|g_\om\|_\infty- \log\Einf(g_\om) + \log (2+\xi_{\om}^{(1)})- \log b_{\om,f} dm <0;$
	or, slightly more generally, 
	\begin{equation*}\label{eq:contPot}
		\int \log \|g_\om\|_\infty dm-\phi^- +\int \log (2+\xi^{(1)}_{\om})dm - \bt_{f} <0.
	\end{equation*}
	
	\item \label{it:xibd}  There exist  $K,\xi\geq 1$ such that $ \xi_\om^{(n)}\leq K\xi^n$ for \maeom \ and every $n\in \NN$, and
	\begin{equation*}
		\int \log \|g_\om\|_\infty - \log\Einf(g_\om) dm+\log \xi - \bt_{f} <0.
	\end{equation*}
\end{enumerate}
\end{example}

\begin{remark}
Roughly speaking, Example~\ref{ex:contPots}\eqref{it:nstar=1}  corresponds to having, on average, potentials with small logarithmic amplitude and controlled variation, and open maps with few contiguous non-full branches and lots of full branches. For constant potentials with no (pair of) contiguous non-full branches, this condition simplifies to 
$\int \log b_{\om,f} 
dm >\log (9).$
\end{remark}
\begin{remark}\label{rmk:ssv}
Example~\ref{ex:contPots}\eqref{it:xibd} allows us to compare our setting with the one-dimensional setting of \cite{stadlbauer_suzuki_varandas_2020}, which deals with $C^1$ potentials $\phi_\om=\log g_\om$ and $C^1$ local diffeomorphisms $T_\om$ satisfying a condition called (P).  In that setting, the maps do not have discontinuities, so $\xi_\om^{(n)}=0$, and the condition in Example~\ref{ex:contPots}\eqref{it:xibd}  reduces to 
$ \int \|\phi_\om\|_\infty - \Einf(\phi_\om) dm-\bt_{f} <0$. Condition (P) 
may be written as
$\int \|\phi_\om\|_\infty - \Einf(\phi_\om) + \log (1+ \|D\phi_\om\|_\infty \diam(\IO))   dm<- \int \log\frac{A_\om}{b_{\om,f}} dm$, where, in the notation of \cite{stadlbauer_suzuki_varandas_2020}, $A_\om=\sg_\om^{-1}p_\om + L_\om q_\om\geq 1$.
Since $\bt_f\geq \int \log b_{\om,f} dm$, the notion of strongly contracting potential is more general than condition (P) in this case.
\end{remark}

\subsection{Non-transitive systems and a covering criterion}\label{S:nonCovering}
The following example shows that our results are applicable to non-transitive systems. 
\begin{example}\label{ex:nonmixing}
Consider interval maps $T_\om:\IO\to \IO$ as in Figure~\ref{fig:ncmap}, where the (possibly empty) left interval of the hole $H_\om$ is positioned within the given branch. Then, $b_{\om,f}\in \{5, 6\}$, $\xi_\om^{(1)}=2$ 
and
$\int \log (2+\xi_{\om}^{(1)})- \log b_{\om,f} dm
\leq \log 4 - \log 5 <0$.
Thus, 
Example~\ref{ex:contPots}\eqref{it:genEx} ensures the constant potential $\log g_\om=0$ is strongly contracting, provided  $\log \|T'_\om \|_\infty,
\log   \Einf|T'_{\om}|,  \frac{\var(|T'_{\om}|)}{\|T'_{\om}\|_\infty} \in L^1(m)$.
In this case, it also follows from Definition~\ref{def:cp} that  
$-t\log |T'_\om|$ is strongly contracting for sufficiently small $t>0$.
\end{example}

\begin{remark}
The map of Figure~\ref{fig:ncmap} is not topologically transitive.
In fact, when the $T_\om$ have a (common) Markov partition, the corresponding transition matrices have a (non-random) absorbing set corresponding to the branches within the invariant interval around 1/2.
\end{remark}

\begin{remark} If a map $T_\om$ has an invariant interval $J\subsetneq \IO$, as in Figure~\ref{fig:ncmap}, and $g_\om=1/|T'_\om|$, then 
\[
\log \|g_\om\|_\infty  +\log (2+\xi^{(1)}_{\om}) \geq 0 \quad \text{and}
\quad
\log \Einf(g_\om)+\log b_{\om,f} dm <0.
\]
Indeed, the first inequality comes from two facts: (i) if $N$ is the number of monotonic branches of $T_\om|_J$, then $N\leq 2+\xi^{(1)}_{\om}$, as all except for possibly the leftmost and rightmost branches of the invariant interval are non-full; and (ii) $\Einf_{x\in J} |T_\om'(x)| \leq N$. The second inequality follows from $\Esup_{x\in \IO} |T_\om'(x)| > {b_{\om, f}}$.

In particular, if all maps $\{T_\om\}$ have a common invariant interval, then
the geometric potential $\{-\log |T'_\om|\}$ is not strongly contracting. This is in agreement with the fact that such a system has at least one non-fully supported random invariant measure absolutely continuous with respect to Lebesgue measure.
\end{remark}

To show a stronger result in this direction, we introduce a notion of covering in the random (closed) setting, due to Buzzi \cite{BuzziEDC}, and
show it is satisfied in wide generality, provided the potential $-\log |T_\om'|$ is strongly contracting.

\begin{definition}
A random  map $\{T_\om\}$ is called \emph{covering} if for every open interval $J\subset \IO$, 
there exists $M_\om(J)\in\NN$ such that
\begin{align}\label{eq: def of open covering}
	\Einf \cL_{\om}^{(M_\om(J))}\ind_J(x) > 0.
\end{align}
In the context of this work, \eqref{eq: def of open covering} is equivalent to $T_\om^{(M_\om(J))}(J)=\IO$.
\end{definition}

\begin{lemma}\label{lem:contracting4geompot}
Consider a random map $\{T_\om\}$ and assume the random potential
$-\log |T_\om'|$ is strongly contracting. Furthermore, assume
$\Om$ is a Borel subset of a separable complete metric space, $m$ is a Borel probability and $\sg$ is an homeomorphism.
Then, $\{T_\om\}$ is covering.
\end{lemma}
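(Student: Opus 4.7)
The plan is to apply Theorems~\ref{thm:qinvMeas} and \ref{thm:randompf} to the transfer operator cocycle $\{\cL_\om\}$ built from $g_\om = 1/|T'_\om|$ (with empty holes), identify the resulting conformal measure $\nu_\om$ with normalized Lebesgue measure, and then exploit the spectral gap in the random Ruelle--Perron--Frobenius decomposition to conclude $\Einf(\cL_\om^{(M)}\ind_J)>0$ for every open interval $J\sub\IO$ and some $M = M_\om(J)\in\NN$.

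In the closed setting with $g_\om = 1/|T'_\om|$, a direct change of variables gives $\int \cL_\om f\,d\leb = \int f\,d\leb$ for every $f\in\BV$. Thus, with $\leb$ normalized so that $\leb(\IO)=1$, Lebesgue defines an equivariant linear functional on the cocycle with multiplier identically $1$. I would next verify that $\Lm_1 = 0$: the identity $\int \cL_\om^{(n)} 1\,d\leb = 1$ together with \eqref{eq:lexp} gives $\Einf(\cL_\om^{(n)} 1)\leq 1$, whence $\Lm_1\leq 0$; conversely, the cone containment $\cL_\om^{(n)} 1\in\cC_{\gm a_{\sg^n\om}}$ furnished by Lemma~\ref{lem:contractingCones} together with $\int \cL_\om^{(n)} 1\,d\leb=1$ yields the tempered lower bound $\Einf(\cL_\om^{(n)} 1)\geq 1/(1+\gm a_{\sg^n\om})$, giving $\Lm_1\geq 0$. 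Hence $\Lm_1 = 0$, and Lebesgue corresponds to an equivariant direction with Lyapunov exponent equal to the leading one. The one-dimensionality of the top Oseledets direction of the dual cocycle (which is what the uniqueness clause of Theorem~\ref{thm:randompf} encodes for $\nu_\om$), combined with the fact that both $\nu_\om$ and $\leb$ are probability measures, then forces $\nu_\om = \leb$ and $\lm_\om\equiv 1$. In particular $\nu_\om(J) = \leb(J)>0$ for every non-empty open interval $J\sub\IO$.

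With this identification in hand, iterating the decomposition in \eqref{eq:rpf2} yields
\[
\cL_\om^{(n)}\ind_J = \leb(J)\,\psi_{\sg^n\om} + Q_\om^{(n)}\ind_J.
\]
Theorem~\ref{thm:randompf} supplies a fixed exponential decay rate for $\|Q_\om^{(n)}\|_\BV$, while $\Einf(\psi_{\sg^n\om}) = 1/\nu_{\sg^n\om}(q_{\sg^n\om}) \geq 1/(1+\gm a_{\sg^n\om})$ (via Lemma~\ref{lem:eqdens}) decays only sub-exponentially since $a_\om$ is tempered. Choosing $M_\om(J)$ to be any $n$ large enough that $\|Q_\om^{(n)}\ind_J\|_\infty\leq \|Q_\om^{(n)}\|_\BV\|\ind_J\|_\BV < \leb(J)\,\Einf(\psi_{\sg^n\om})$ then forces $\Einf(\cL_\om^{(M_\om(J))}\ind_J)>0$, which is \eqref{eq: def of open covering}.

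The principal obstacle is the identification $\nu_\om = \leb$: the conformal measure is defined by the limit formula of Lemma~\ref{lem:confmeas} and is not a priori recognizable as Lebesgue, so the identification requires both the vanishing of $\Lm_1$ (for which the closed-map change of variables formula and the cone containment are both needed) and the MET-based uniqueness of the top equivariant direction, which is precisely why the separable metric space hypotheses on $(\Om,m,\sg)$ are imposed in the statement of the lemma.
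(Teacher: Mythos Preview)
Your proof is correct and follows essentially the same route as the paper: identify $\nu_\om$ with Lebesgue via the change-of-variables identity $\int \cL_\om f\,d\leb=\int f\,d\leb$ together with the uniqueness clause of Theorem~\ref{thm:randompf}, then deduce covering. The only difference is in the final step: the paper reads $\Einf(\cL_\om^{(M)}\ind_J)>0$ directly from the limit formula $\nu_\om(\ind_J)=\lim_n \Einf(\cL_\om^{(n)}\ind_J)/\Einf(\cL_\om^{(n)}1)=\leb(J)>0$, whereas you unpack the Ruelle--Perron--Frobenius decomposition $\cL_\om^{(n)}\ind_J=\leb(J)\psi_{\sg^n\om}+Q_\om^{(n)}\ind_J$ and compare the exponential decay of $Q$ against the tempered lower bound on $\Einf(\psi_{\sg^n\om})$. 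Your version is slightly longer but has the virtue of not applying \eqref{eq:nu} to $\ind_J\notin\cC_1$; your explicit verification that $\Lm_1=0$ is also a useful addition.
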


\begin{proof}
Let $\leb$ denote the normalized Lebesgue measure on $\IO$.
A simple but crucial observation is that in this case $\nu_\om(f)=\int f d\leb$, where $\nu_\om$ is as in \S\ref{S:confMeas}. 
Indeed,
$\int \cL_\om f d\leb=\int f d\leb$ holds by the change of variables formula and hence $f\mapsto \int f d\leb$ is an equivariant functional (in fact it is invariant by all $\cL_\om$, and $\lm_\om^+=1$).
Theorem~\ref{thm:randompf} ensures uniqueness of the equivariant conformal measure, so 
$\nu_\om(f)=\int f d\leb$.

Now we show the random map is covering. Let $J\subset \IO$ be an open interval.
Then, $0<\leb(J)=\nu_\om(\ind_J)=\lim_{n\to\infty}\frac{\Einf \cL_{\om}^{(n)} \ind_J}{\Einf\cL_{\om}^{(n)} 1}$. In particular, there exists $M>0$ such that $\Einf \cL_{\om}^{(M)} \ind_J>0$,
as needed.
\end{proof}

\subsection{Random intermittent maps}\label{S:intermittent}
For $0<\gm<1$, consider the Manneville--Pomeau map $f_\gm: [0,1] \to [0,1]$, given by
$$f_\gm(x)=\begin{cases} x(1+2^\gm x^\gm) \quad &0\leq x < \frac12,\\ 2x-1 \quad &\frac12\leq x \leq 1. \end{cases}$$
This is a class of intermittent maps, with a neutral fixed point at 0,
which have been investigated as a model of non-uniformly hyperbolic behaviour since the work of Liverani, Saussol and Vaienti \cite{liverani_probabilistic_1999}.
More recently, Demers and Todd have investigated open and closed intermittent maps with geometric potentials $-t \log |f'_\gm|$ in \cite{DemersTodd-SlowFastEscape-17}.
The next example shows a family of strongly contracting geometric potentials for random intermittent maps.
\begin{example}\label{ex:MPmaps}
For $j=1,2,\dots$, let $\gm_j \in (0,1)$.
Let $\Om=\cup_{j=1}^\infty \Om_j$ be an (at most) countable partition of $\Om$ into measurable sets, and for each $\om \in \Om_j$, let $T_\om=f_{\gm_j}$.
Let $0\leq t<\frac{\log2}{\log 3}\approx 0.63$.
Then, the geometric potential $\{\log g_\om:=-t \log |T'_\om|\}$ is strongly contracting for $\{T_\om\}$.
Indeed, we note that for all $0<\gm<1$, we have $\Einf |f_\gm'|=1$ and $\|f_\gm'\|_\infty < 3$.
Furthermore, $\xi^{(n)}_\om=0, b^{(n)}_{\om,f}=2^n$ for all $n\in \NN$.
Thus,  Example~\ref{ex:contPots}\eqref{it:xibd} (with $K=\xi=1$) yields the claim, since $\var(\log|T'_\om|)\in L^1(m)$ and
\[ \int \log \|g_\om\|_\infty - \log\Einf(g_\om) dm+\log \xi - \bt_{f} \leq 0+t\log 3+0-\log2 <0.\]
\end{example}

The following example treats random intermittent maps with holes. 
\begin{example}\label{ex:intMapsHoles}
Let $\Om=\cup_{j=1}^\infty \Om_j$ be an (at most) countable partition of $\Om$ into measurable sets, and for each $\om \in \Om_j$, let $T_\om=T_j:\IO:=[0,1] \to [0,1]$ be a piecewise smooth map with a hole $H_\om=H_j$ satisfying the following conditions:
\begin{enumerate}[(i)]
	\item $T_\om(0)=0$ and $T'_\om(0)=1=\Einf_{\IO}|T'_\om|$,
	\item $\|T'_\om\|_\infty\leq K_\om$, with $\log K_\om \in L^1(m)$,
	\item $\var(\log|T'_\om|)\leq v_\om$, with $v_\om \in L^1(m)$,
	\item $(T_\om, H_\om)$ has at most two contiguous non-full branches; for instance, this happens if $T_\om$ only has full branches and $H_\om$ consists of a single interval, and
	\item $(T_\om, H_\om)$ has $b_{\om,f}$ full branches, and
	$\bt:= \int \log b_{\om,f} dm > \log 4 + t_0 \int \log K_\om dm$, for some $0\leq t_0<1$.\footnote{Note that $K_\om \geq b_{\om, f}$.}
\end{enumerate}
Then, for every $0\leq t \leq t_0$,
the geometric potential $\{\log g_\om:=-t \log |T'_\om|\}$ is strongly contracting for $\{(T_\om, H_\om)\}$.
Indeed, Example~\ref{ex:contPots}\eqref{it:genEx} yields the claim, since
\[ \int \log \|g_\om\|_\infty - \log\Einf(g_\om) + \log (2+\xi_{\om}^{(1)}) - \log b_{\om,f} dm\leq 0+t\log K_\om dm + \log 4 - \bt<0.\]
\end{example}

\subsection{Random open systems and escape rates}\label{S:escapeRates}
The following example, similar to \cite[\S13]{AFGTV2}, relates the maximal Lyapunov exponent of open and closed systems to the escape rate of a conformal measure through the holes. 
\begin{example}
Assume  $\{\log g_\om\}$ is a strongly contracting potential for the random closed map $\{T_\om\}$.
Assume  $(H_\om^\ep)_{0<\ep\leq\ep_0}$ is an increasing family of holes for each $\om\in \Om$. That is, 
$H_\om^\ep$ is a finite union of intervals, and $\emptyset:=H_\om^0\subset H_\om^{\ep'} \subset H_\om^{\ep}$ for $\ep'<\ep$. Let 
$b_{\om,f}^{\ep}$ the number of full branches of $\{(T_\om,H_\om^\ep)\}$ and $\xi_{\om}^{\ep}$ the largest number of contiguous
non-full intervals for $\{(T_\om,H_\om^\ep)\}$.
Suppose there exist $b_\om, \xi_\om>0$ such that for every $\ep\geq 0$, 
$b^\ep_{\om,f}\geq b_\om$ and $\xi^\ep_\om\leq \xi_\om$,
and
assume
\[\int \log \|g_\om\|_\infty- \log\Einf(g_\om) + \log (2+\xi_{\om})- \log b_{\om} dm <0.\]
Then, for each $0<\ep\leq\ep_0$, $\{\log g_\om\}$ is a strongly contracting potential for the random open map $\{(T_\om,H_\om^\ep)\}$.
Let $\nu^\ep_\om$ and $q^\ep_\om$ be the conformal measures and equivariant densities from Theorem~\ref{thm:qinvMeas}, respectively, and let $\Lm^\ep$ the maximal Lyapunov exponent (expected pressure). Then $\ep\mapsto \Lm^\ep$ is non-increasing. Indeed,  
if $\ep'<\ep$, because of the monotonicity of the holes, for every $\om\in \Om, n\in \NN$, we have
$\Einf(\cL_{\om}^{\ep',(n)}1)\geq \Einf(\cL_{\om}^{\ep,(n)}1)$.
Since $\Lm^\ep=\lim_{n\to\infty} \frac1n\log\Einf(\cL_{\om}^{\ep,(n)}1)$, $\ep\mapsto \Lm^\ep$ is non-increasing.

Furthermore, for $0\leq \ep'<\ep$,
$\Lm^{\ep'}-\Lm^\ep$ gives the escape rate of the measure $\nu^{\ep'}$ through $\{H_\om^\ep\}$.
That is, $-\lim\frac1n\log \nu_\om^{\ep'}(X^\ep_{\om,n})=\Lm^{\ep'}-\Lm^\ep$, where $X^\ep_{\om,n}$ is the $n-$step survivor set for $\{(T_\om,H_\om^\ep)\}$. Indeed,
\begin{align*}
	\nu_{\om}^{\ep'}(X^\ep_{\om,n-1})
	&=
	\frac{1}{\lm_{\om}^{\ep',(n)}}\nu^{\ep'}_{\sg^n(\om)}\lt(\cL_{\om}^{\ep',(n)}(\ind_{X^\ep_{\om,n-1}})\rt)
	=\frac{1}{\lm_{\om}^{\ep',(n)}}\nu^{\ep'}_{\sg^n(\om)}\lt(\cL_{\om}^{\ep,(n)} 1\rt)
	\\
	&=\frac{\Einf(\cL_{\om}^{\ep,(n)} 1)}{\lm_{\om}^{\ep',(n)}} \lt(\nu^{\ep'}_{\sg^n(\om)}(q^\ep_{\sg^n(\om)})-\nu^{\ep'}_{\sg^n(\om)}\lt(\frac{\cL_{\om}^{\ep,(n)}1}{\Einf(\cL_{\om}^{\ep,(n)} 1)}-q^\ep_{\sg^n(\om)}\rt)\rt).
\end{align*}
Lemma~\ref{lem:eqdens} implies that $\lim_{n\to\infty}\frac1n\log \lt\|\frac{\cL_{\om}^{\ep,(n)}1}{\Einf(\cL_{\om}^{\ep,(n)} 1)}-q^\ep_{\sg^n(\om)}\rt\|_\infty<0$.
Since $\nu^{\ep'}_{\sg^n(\om)}$ is a probability measure, $\Einf q^\ep_{\sg^n(\om)}=1$ and $\|q^\ep_{\sg^n\om}\|_\infty$ is tempered, then $\lim_{n\to\infty}\frac1n\log\nu^{\ep'}_{\sg^n(\om)}(q^\ep_{\sg^n(\om)})=0$.
Thus, 
$$\lim_{n\to\infty}\frac1n\log \nu_\om^{\ep'}(X^\ep_{\om,n})=
\lim_{n\to\infty}\frac1n\log  \Einf(\cL_{\om}^{\ep,(n)} 1) - \lim_{n\to\infty}\frac1n \log\lm^{\ep',(n)}_\om= \Lm^\ep-\Lm^{\ep'},$$ as claimed.
\end{example}

\section*{Acknowledgments}
The authors are partially supported by an ARC Discovery Project.

\bibliographystyle{abbrv}
\bibliography{infvar_refs}
\end{document}